\newtheorem{theorem}{Theorem}[section]
\newtheorem{proposition}[theorem]{Proposition}
\newtheorem{remark}[theorem]{Remark}
\newtheorem{lemma}[theorem]{Lemma}
\newtheorem{example}[theorem]{Example}
\newtheorem{definition}[theorem]{Definition}
\numberwithin{equation}{section}
\begin{document}

\title[Lipeng Luo\textsuperscript{1}, Yanyong Hong\textsuperscript{2} and Zhixiang Wu\textsuperscript{3}]{Finite irreducible modules of Lie conformal algebras $\mathcal{W}(a,b)$ and some Schr\"{o}dinger-Virasoro type Lie conformal algebras}
\author{Lipeng Luo\textsuperscript{1}, Yanyong Hong\textsuperscript{2} and Zhixiang Wu\textsuperscript{3}}

\address{\textsuperscript{1}Department of Mathematics, Zhejiang University, Hangzhou, Zhejiang Province,310027,PR China.}
\address{\textsuperscript{2}Department of Mathematics, Hangzhou Normal University,
Hangzhou, 311121, P.R.China.}
\address{\textsuperscript{3}Department of Mathematics, Zhejiang University, Hangzhou, Zhejiang Province,310027,PR China.}

\email{\textsuperscript{1}luolipeng@zju.edu.cn}
\email{\textsuperscript{2}hongyanyong2008@yahoo.com}
\email{\textsuperscript{3}wzx@zju.edu.cn}

\keywords{conformal algebra, conformal module, irreducible }
\subjclass[2010]{17B10, 17B65, 17B68}


\date{\today}
\thanks{ This work was supported by the National Natural Science Foundation of China (No. 11871421, 11501515) and the Zhejiang Provincial Natural Science Foundation of China (No. LQ16A010011).}

\begin{abstract}
Lie conformal algebras $\mathcal{W}(a,b)$ are the semi-direct sums of Virasoro Lie conformal algebra and its nontrivial conformal modules of rank one. In this paper, we first give a complete classification of all finite nontrivial irreducible conformal modules of $\mathcal{W}(a,b)$. It is shown that all such modules are of rank one. Moreover, with a similar method, all finite nontrivial irreducible conformal modules of Schr\"{o}dinger-Virasoro type Lie conformal algebras $TSV(a,b)$ and $TSV(c)$ are characterized.
\end{abstract}

\footnote{The second author is the corresponding author.}
\maketitle

\section{Introduction}\label{intro}
Lie conformal algebra, which was introduced by Kac in \cite{KacV,KacF}, gives an axiomatic description of the operator product expansion (or rather its Fourier transform) of chiral fields in conformal field theory (see \cite{BPZ}). It has been shown that the theory of Lie conformal algebras has close connections to
vertex algebras, infinite-dimensional Lie algebras satisfying the locality property in \cite{KacL} and Hamiltonian formalism in the theory of nonlinear evolution equations (see \cite{BDK}).  Virasoro Lie conformal algebra $Vir$ and current Lie conformal algebra $Cur\mathcal{G}$ associated to a Lie lagebra $\mathcal{G}$  are two important examples of Lie conformal algebras. It was shown in \cite{DK} that $Vir$ and all current Lie conformal algebras $Cur\mathcal{G}$ where $\mathcal{G}$ is a finite dimensional simple Lie algebra exhaust all finite simple Lie conformal algebras. Finite irreducible conformal modules of these simple Lie conformal algebras were classified in \cite{CK} by investigating the representation theory of their extended annihilation algebras. Moreover, the cohomology theory of these Lie conformal algebras was investigated in \cite{BKV}.

For finite non-simple Lie conformal algebras, there are also some developments. One useful method to construct finite non-simple Lie conformal algebras is using the correspondence between formal distribution Lie algebras and Lie conformal algebras. Su and Yuan in \cite{Su-Yuan} investigated two non-simple Lie conformal algebras obtained from Schr\"{o}dinger-Virasoro Lie algebra and the extended Schr\"{o}dinger-Virasoro Lie algebra. Similarly, a class of Lie conformal algebras $\mathcal{W}(b)$ was obtained from the infinite-dimensional Lie algebra $W(a,b)$ which is a semidirect sum of the centerless Virasoro algebra and a intermediate series module $A(a, b)$ (see \cite{Xu-Yue}) and a Schr\"{o}dinger-Virasoro type Lie conformal algebra obtained from a twisted case of the deformative
Schr\"{o}dinger-Virasoro Lie algebra was studied in \cite{Wang-Xu-Xia}. Moreover,  complete classifications of finite irreducible conformal modules of $\mathcal{W}(b)$ and Schr\"{o}dinger-Virasoro Lie conformal algebra given in
\cite{Su-Yuan} were presented in \cite{Wu-Yuan}.
In addition, from the point of view of Lie conformal algebra, Hong in \cite{Hong} presented two classes of Schr\"{o}dinger-Virasoro type Lie conformal algebras $TSV(a,b)$ and $TSV(c)$ and gave a characterization of central extensions, conformal derivations and conformal modules of rank one. Note that $TSV(\frac{3}{2},0)$ is just the Schr\"{o}dinger-Virasoro Lie conformal algebra in \cite{Su-Yuan}
and $TSV(0,0)$ is just the Schr\"{o}dinger-Virasoro type Lie conformal algebra studied in \cite{Wang-Xu-Xia}. In this paper, we plan to give a complete classification of finite irreducible conformal modules of $TSV(a,b)$ and $TSV(c)$. Let $\mathcal{W}(a,b)$ be the semi-direct sum  of $Vir$ and its nontrivial conformal modules of rank one. It is easy to see that $\mathcal{W}(a,b)$ is a subalgebra of $TSV(a,b)$ according to the definition of $TSV(a,b)$. Thus, the representation theory of $TSV(a,b)$ is related with that of $\mathcal{W}(a,b)$. Therefore, we first determine all finite nontrivial irreducible conformal modules of $\mathcal{W}(a,b)$ and give a complete classification of all finite nontrivial irreducible conformal modules of $TSV(a,b)$ and $TSV(c)$. Note that $\mathcal{W}(1-b,0)$ is just the Lie conformal algebra $\mathcal{W}(b)$.

The rest of the paper is organized as follows. In Section 2, we introduce some basic definitions, notations, and related known results about Lie conformal algebras. In Section 3, we first introduce the definition of $\mathcal{W}(a,b)$ and study the extended annihilation algebra of $\mathcal{W}(a,b)$. Then we determine the irreducible property of all free nontrivial rank one $\mathcal{W}(a,b)$-modules over $\mathbb{C}[\partial]$. Finally, we give a complete classification of all finite nontrivial irreducible conformal modules of $\mathcal{W}(a,b)$. In Section 4, we classify all finite nontrivial irreducible conformal modules over Lie conformal algebras $TSV(a,b)$ and $TSV(c)$ by using the results and methods given in Section 3.

Throughout this paper, we use notations $\mathbb{C}$, $\mathbb{C^{*}}$, $\mathbb{Z}$ and $\mathbb{Z^{+}}$ to represent the set of complex numbers, nonzero complex numbers, integers and nonnegative integers, respectively. In addition, all vector spaces and tensor products are over $\mathbb{C}$. In the absence of ambiguity, we abbreviate $\otimes_{\mathbb{C}}$ into $\otimes$.

\section{preliminaries}

 In this section, we recall some basic definitions, notations and related results about Lie conformal algebras for later use. For a detailed description, one can refer to \cite{KacV}.
\begin{definition}
\begin{em}
A \emph {Lie conformal algebra} $\mathcal{R}$ is a $\mathbb{C}[\partial]$-module endowed with a $\mathbb{C}$-linear map from $\mathcal{R}\otimes\mathcal{R}$ to $\mathbb{C}[\lambda]\otimes\mathcal{R}, a\otimes b \mapsto [a_\lambda b]$, called the $\lambda$-bracket, satisfying the following axioms:
\begin{align}
    [\partial a_\lambda b]&=-\lambda[a_\lambda b],\quad  [ a_\lambda \partial b]=(\partial+\lambda)[a_\lambda b] \quad (conformal \  sesquilinearity),\\
    {}[a_\lambda b]&=-[b_{-\lambda-\partial} a]\quad (skew\text{-}symmetry),\\
    {}[a_\lambda [b_\mu c]]&=[[a_\lambda b]_{\lambda+\mu} c]+[ b_\mu [a_\lambda c]]\quad (Jacobi\  identity).
\end{align}
for $a,b,c \in\mathcal{R}$.
\end{em}		
\end{definition}

A Lie conformal algebra $\mathcal{R}$ is called \emph {finite} if $\mathcal{R}$ is finitely generated as a $\mathbb{C}[\partial]$-module. The \emph {rank} of a Lie conformal algebra $\mathcal{R}$, denoted by rank($\mathcal{R}$), is its rank as a $\mathbb{C}[\partial]$-module.

Let $\mathcal{R}$ be a Lie conformal algebra. There is an important Lie algebra associated with it. For each $j \in\mathbb{Z^{+}}$, we can define the  \emph {$j$-th product} $a_{(j)}b$ of two elements $a,b \in\mathcal{R}$ as follows:
\begin{align}
  [a_\lambda b]=\sum_{j\in\mathbb{Z^{+}}}(a_{(j)}b)\frac{\lambda^{j}}{j!}.
\end{align}
Let $Lie(\mathcal{R})$ be the quotient
of the vector space with basis $a_{(n)}$ $(a\in R, n\in\mathbb{Z})$ by
the subspace spanned over $\mathbb{C}$ by
elements:
$$(\alpha a)_{(n)}-\alpha a_{(n)},~~(a+b)_{(n)}-a_{(n)}-b_{(n)},~~(\partial
a)_{(n)}+na_{(n-1)},~~~\text{where}~~a,~~b\in R,~~\alpha\in \mathbb{C},~~n\in
\mathbb{Z}.$$ The operation on $Lie(\mathcal{R})$ is defined as follows:
\begin{equation}\label{106}
[a_{(m)}, b_{(n)}]=\sum_{j\in \mathbb{Z^{+}}}\left(\begin{array}{ccc}
m\\j\end{array}\right)(a_{(j)}b)_{(m+n-j)}.\end{equation} Then
$Lie(\mathcal{R})$ is a Lie algebra and it is called the\emph{ coefficient algebra} of $R$ (see \cite{KacV}).

\begin{definition}
\begin{em}
The \emph {annihilation algebra} associated to a Lie conformal algebra $\mathcal{R}$ is the subalgebra
\begin{align}
  &Lie(\mathcal{R})^{+}=span\{a_{(n)}\ |\ a\in \mathcal{R}, n \in \mathbb{Z^{+}}\},\nonumber
\end{align}
 of the Lie algebra $Lie(\mathcal{R})$. The semi-direct sum of the 1-dimensional Lie algebra $\mathbb{C}\partial$ and $Lie(\mathcal{R})^{+}$ with the action $ \partial (a_{(n)})=-na_{(n-1)}$ is called the \emph {extended annihilation algebra} $Lie(\mathcal{R})^e$.
\end{em}		
\end{definition}

Now, we can introduce the definition of conformal module.
\begin{definition}
\begin{em}
A \emph {conformal module} $M$ over a Lie conformal algebra $\mathcal{R}$ is a $\mathbb{C}[\partial]$-module endowed with a $\mathbb{C}$-linear map $\mathcal{R}\otimes M \rightarrow \mathbb{C}[\lambda]\otimes M, a\otimes v \mapsto a_\lambda v$, satisfying the following conditions:
\begin{align}
    &(\partial a)_\lambda v=-\lambda a_\lambda v,\quad  a_\lambda (\partial v)=(\partial+\lambda)a_\lambda v, \quad \\
    &{}a_\lambda (b_\mu v)- b_\mu (a_\lambda v)=[a_\lambda b]_{\lambda+\mu} v.\quad
\end{align}
for $a,b \in\mathcal{R}, v\in M$.

If $M$ is finitely generated over $\mathbb{C}[\partial]$, then $M$ is simply called \emph {finite}. The \emph {rank} of a conformal module $M$ is its rank as a $\mathbb{C}[\partial]$-module. A conformal module $M$ is called \emph {irreducible} if it has no nontrivial submodules.
\end{em}		
\end{definition}

In the following, since we only consider conformal modules, we abbreviate ``conformal modules" into ``module".

Let $\mathcal{R}$ be a Lie conformal algebra and $M$ an $\mathcal{R}$-module. An element $m\in M$ is called \emph {invariant} if $\mathcal{R}_\lambda m=0$. Obviously, the set of all invariants of $M$ is a conformal submodule of $M$, denoted by $M^0$. An $\mathcal{R}$-module $M$ is called trivial if $M^0=M$, i.e., a module on which $\mathcal{R}$ acts trivially. For any $a \in \mathbb{C}$, we obtain a natural trivial  $\mathcal{R}$-module $\mathbb{C}_a$ which is determined by $a$, such that $\mathbb{C}_a=\mathbb{C}$ and $\partial m=am, \mathcal{R}_\lambda m=0$ for all $m \in \mathbb{C}_a$. It is easy to check that the modules $\mathbb{C}_a$ with $a \in \mathbb{C}$ exhaust all trivial irreducible $\mathcal{R}$-modules. Therefore, we only need to consider nontrivial modules in the sequel.

For any $\mathcal{R}$-module $M$, we have some basic results as follows.

\begin{lemma}\label{lem1}(Ref. \cite{KacL}, Lemma 2.2)
Let $\mathcal{R}$ be a Lie conformal algebra and $M$ an $\mathcal{R}$-module.
\begin{enumerate}
 \item If $\partial m=am$ for some $a \in \mathbb{C}$ and $m \in M$, then $\mathcal{R}_\lambda m=0$.
 \item If $M$ is a finite module without any nonzero invariant element, then $M$ is a free $\mathbb{C}[\partial]$-module.
\end{enumerate}
\end{lemma}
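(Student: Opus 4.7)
The plan is to handle the two parts separately, using the module axioms for part (1) and the structure theorem for finitely generated modules over a PID for part (2), with part (1) feeding into part (2).

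For part (1), suppose $\partial m = am$ and let $r \in \mathcal{R}$ be arbitrary. The second axiom of a conformal module gives $r_\lambda(\partial m) = (\partial + \lambda)(r_\lambda m)$, while $\mathbb{C}$-linearity in $m$ gives $r_\lambda(am) = a(r_\lambda m)$. Equating these yields $(\partial + \lambda - a)(r_\lambda m) = 0$ in $\mathbb{C}[\lambda] \otimes M$. Writing $r_\lambda m = \sum_{j=0}^{n} v_j \lambda^{j}$ with $v_j \in M$, the coefficient of $\lambda^{n+1}$ on the left is exactly $v_n$, so the leading coefficient must vanish. Iterating (or simply contradicting the choice of leading term) forces $r_\lambda m = 0$. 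Since $r$ was arbitrary, $\mathcal{R}_\lambda m = 0$ as claimed.

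For part (2), I would invoke the classical structure theorem: a finitely generated module over the PID $\mathbb{C}[\partial]$ is the direct sum of a free part and a torsion part, with the torsion part isomorphic to a finite direct sum of cyclic modules $\mathbb{C}[\partial]/(p_i(\partial))$. If $M$ has any nonzero torsion, there exists a nonzero $m_0 \in M$ and a nonzero polynomial $p(\partial)$ with $p(\partial)m_0 = 0$. Choose such a $p$ of minimal degree annihilating $m_0$. Since $\mathbb{C}$ is algebraically closed, $p$ factors over $\mathbb{C}$ as $p(\partial) = \prod_i (\partial - a_i)$; picking any root $a$ and setting $m' = \bigl(\prod_{i}(\partial - a_i)/(\partial - a)\bigr) m_0$ produces a nonzero element with $(\partial - a)m' = 0$, i.e.\ $\partial m' = a m'$. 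By part (1), $m'$ is then a nonzero invariant element of $M$, contradicting the hypothesis. Hence $M$ is torsion-free, and a finitely generated torsion-free module over a PID is free.

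The argument is essentially transparent, and there is no real obstacle; the one place where care is needed is the extraction of an eigenvector for $\partial$ from an arbitrary torsion element, which relies on algebraic closedness of $\mathbb{C}$ and on minimality of the annihilating polynomial. Everything else is the standard dictionary between the conformal module axioms and the $\mathbb{C}[\partial]$-module structure.
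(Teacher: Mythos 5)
Your proof is correct: part (1) is the standard leading-coefficient argument from the axiom $r_\lambda(\partial m)=(\partial+\lambda)r_\lambda m$, and part (2) extracts a $\partial$-eigenvector from a minimal-degree annihilating polynomial and applies part (1), exactly the route the paper indicates in the discussion following the lemma (which it otherwise quotes from \cite{KacL} without proof). No gaps.
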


 Let $M$ be an $\mathcal{R}$-module. An element $m \in M$ is called a \emph {torsion element} if there exists a nonzero polynomial $ p(\partial)\in \mathbb{C}[\partial]$ such that $p(\partial)m=0$. For any $\mathbb{C}[\partial]$-module $M$, it is not difficult to check that there exists a nonzero torsion element if and only if there exists nonzero $m \in M$ such that $\partial m=am$ for some $a \in \mathbb{C}$ by using \emph {The Fundamental Theorem of Algebra}. By Lemma \ref{lem1}, we can deduce that a finitely generated $\mathbb{C}[\partial]$-module is free if and only if it has no nonzero torsion element. Thus, the following result is obvious.

 \begin{lemma}
Let $\mathcal{R}$ be a Lie conformal algebra and $M$ be a finite irreducible $\mathcal{R}$-module. Then $M$ has no nonzero torsion elements and is free of a finite rank as a $\mathbb{C}[\partial]$-module.
\end{lemma}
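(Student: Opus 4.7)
The plan is to combine the standing nontriviality convention (stated in the paragraph immediately preceding the lemma) with Lemma~\ref{lem1}. The key observation is that the set of invariants
\[
M^0=\{m\in M\mid \mathcal{R}_\lambda m=0\}
\]
is an $\mathcal{R}$-submodule of $M$: it is closed under $\partial$ because $a_\lambda(\partial m)=(\partial+\lambda)a_\lambda m=0$ whenever $a_\lambda m=0$, and the $\lambda$-action is trivial on it by definition.

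Since $M$ is irreducible, $M^0$ is either $0$ or all of $M$. The latter case would make $M$ a trivial irreducible module, which is excluded by the running convention that only nontrivial modules are considered. Hence $M^0=0$, and Lemma~\ref{lem1}(2) applies directly to conclude that $M$ is free as a $\mathbb{C}[\partial]$-module; since $M$ is finite by hypothesis, it is free of finite rank.

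For the assertion that $M$ has no nonzero torsion elements, I would invoke the equivalence recorded just before the lemma: via the Fundamental Theorem of Algebra, a nonzero torsion element in $M$ would produce a nonzero $m\in M$ with $\partial m=am$ for some $a\in\mathbb{C}$; by Lemma~\ref{lem1}(1) this $m$ would be invariant, contradicting $M^0=0$. (Alternatively, one can simply note that $\mathbb{C}[\partial]$ is a PID and free modules over a PID are torsion-free, so this follows from the freeness already established.)

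There is no genuine obstacle here: the statement essentially packages the irreducibility/nontriviality dichotomy together with the two parts of Lemma~\ref{lem1}. The only point requiring attention is recognizing that the hypothesis of nontriviality on $M$ is implicit, carried in from the convention declared a few lines above.
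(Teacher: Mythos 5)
Your argument is correct and follows exactly the route the paper intends: the paper declares the lemma ``obvious'' from the preceding discussion (torsion element $\Leftrightarrow$ eigenvector of $\partial$ $\Rightarrow$ invariant element via Lemma~\ref{lem1}(1), and freeness via Lemma~\ref{lem1}(2)), and you have simply spelled out those steps, including the necessary appeal to the standing nontriviality convention to rule out $M^0=M$.
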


Similar to the definition of the $j$-th product $a_{(j)}b$ of two elements $a,b \in\mathcal{R}$, we can also define \emph {$j$-th actions} of $\mathcal{R}$ on $M$ for each $j \in\mathbb{Z^{+}}$, i.e., $a_{(j)}v$ for any $a \in\mathcal{R},v\in M$ by
\begin{align}
  a_\lambda v=\sum_{j\in\mathbb{Z^{+}}}(a_{(j)}v)\frac{\lambda^{j}}{j!}.
\end{align}
By \cite{CK}, Cheng and Kac investigated a close connection between the module of a Lie conformal algebra and that of its extended annihilation algebra.

\begin{lemma}\label{ll1}
Let $\mathcal{R}$ be a Lie conformal algebra and $M$ be a $\mathcal{R}$-module. Then $M$ is precisely a module over $Lie(\mathcal{R})^e$ satisfying the property
\begin{align}\label{eq1}
  a_{(n)}\cdot v=0,\quad n\geq N,
\end{align}
for $a \in \mathcal{R}, v \in M$, where $N$ is a non-negative integer depending on $a$ and $v$.
\end{lemma}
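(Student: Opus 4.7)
The plan is to establish both directions of the equivalence by passing between the $\lambda$-bracket of the conformal $\mathcal{R}$-module structure and the component actions $a_{(n)}$ via the Taylor expansion (2.7).

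First I would handle the direction ``conformal $\mathcal{R}$-module $\Rightarrow$ $Lie(\mathcal{R})^e$-module with locality". Starting from a conformal $\mathcal{R}$-module $M$, the expansion $a_\lambda v = \sum_{j \in \mathbb{Z}^+} (a_{(j)}v)\lambda^j/j!$ already lies in $\mathbb{C}[\lambda]\otimes M$, hence is a polynomial in $\lambda$; this immediately yields the locality property (2.8). To turn this into an action of $Lie(\mathcal{R})^e$, I would declare $a_{(n)}$ to act as the $n$-th Taylor coefficient and let $\partial$ act via the given $\mathbb{C}[\partial]$-module structure on $M$. The defining relations of $Lie(\mathcal{R})$ (linearity in the first slot and $(\partial a)_{(n)} = -na_{(n-1)}$) follow from the first sesquilinearity axiom $(\partial a)_\lambda v = -\lambda a_\lambda v$ by comparing coefficients of $\lambda^n$. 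The semidirect relation $[\partial, a_{(n)}]\cdot v = -n a_{(n-1)}\cdot v$ in $Lie(\mathcal{R})^e$ follows analogously from the second sesquilinearity axiom $a_\lambda(\partial v) = (\partial+\lambda) a_\lambda v$. Finally, the bracket relation (2.5) on $M$ would be verified by expanding the module Jacobi identity $a_\lambda(b_\mu v) - b_\mu(a_\lambda v) = [a_\lambda b]_{\lambda+\mu} v$ in powers of $\lambda$ and $\mu$ and matching coefficients of $\lambda^m \mu^n/(m!\,n!)$.

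For the converse, given a $Lie(\mathcal{R})^e$-module $M$ satisfying (2.8), I would define $a_\lambda v := \sum_{n \in \mathbb{Z}^+} (a_{(n)} \cdot v)\,\lambda^n/n!$; locality makes this a finite sum, so $a_\lambda v \in \mathbb{C}[\lambda]\otimes M$. The two sesquilinearity axioms of Definition 2.3 would then be verified directly from the relation $(\partial a)_{(n)} = -na_{(n-1)}$ inside $Lie(\mathcal{R})$ and from the semidirect action of $\partial$ on $a_{(n)}$, respectively. The module Jacobi identity would be recovered by substituting the bracket formula (2.5) into $a_\lambda(b_\mu v) - b_\mu(a_\lambda v)$ and reassembling the resulting finite double sum as the Taylor expansion of $[a_\lambda b]_{\lambda+\mu} v$.

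The main bookkeeping obstacle is precisely this last matching of coefficients: converting between the binomial-coefficient form of (2.5) and the polynomial identity in $\lambda,\mu$ coming from the conformal Jacobi identity. The translation rests on the elementary identity $(\lambda+\mu)^k/k! = \sum_{j=0}^{k} (\lambda^j/j!)(\mu^{k-j}/(k-j)!)$, after which a reindexing shows the two sides agree term by term. Everything else in the argument is formal, and the equivalence follows.
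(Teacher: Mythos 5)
Your argument is correct and is essentially the standard one: the paper itself gives no proof of this lemma, citing it directly from Cheng--Kac \cite{CK}, and the translation you describe (Taylor expansion in $\lambda$, matching sesquilinearity with the defining relations $(\partial a)_{(n)}=-na_{(n-1)}$ and with $[\partial,a_{(n)}]=-na_{(n-1)}$, and recovering the commutator formula (\ref{106}) from the conformal Jacobi identity via $(\lambda+\mu)^k/k!=\sum_{j}(\lambda^j/j!)(\mu^{k-j}/(k-j)!)$) is exactly the proof given there. No gaps.
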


\begin{remark}
\begin{em}
By abuse of notations, we also call a Lie algebra module satisfying (\ref{eq1}) a conformal module over $Lie(\mathcal{R})^e$.
\end{em}
\end{remark}

\section{Finite irreducible modules over $\mathcal{W}(a,b)$}

In this section, we introduce the definition of Lie conformal algebra $\mathcal{W}(a,b)$ and consider the extended annihilation algebra $Lie(\mathcal{W}(a,b))^e$ of $\mathcal{W}(a,b)$ at first. Then we investigate the irreducibility property of the free nontrivial $\mathcal{W}(a,b)$-modules of rank one. Finally, we classify all finite nontrivial irreducible modules over $\mathcal{W}(a,b)$.

\subsection{The definition of $\mathcal{W}(a,b)$ and its extended annihilation algebra}~

\begin{definition}
The Lie conformal algebra $\mathcal{W}(a,b)$ with two parameters $a$, $b\in \mathbb{C}$ is a free $\mathbb{C}[\partial]$-module generated by $L$ and $W$ satisfying
\begin{eqnarray}
[L_\lambda L]=(\partial+2\lambda)L,~~~~[L_\lambda W]=(\partial+a\lambda+b)W,~~~~[W_\lambda W]=0.
\end{eqnarray}
\end{definition}

Note that the subalgebra $\mathbb{C}[\partial]L$ is the Virasoro Lie conformal algebra $Vir$. It is well known from \cite{CK} that

 \begin{proposition}\label{pr1}
 All free nontrivial $Vir$-modules of rank one over $\mathbb{C}[\partial]$ are as follows($ \alpha,\beta \in \mathbb{C}$):
  \begin{equation}
  \xymatrix{M_{\alpha,\beta}=\mathbb{C}[\partial]v,\qquad L_\lambda v=(\partial+\alpha\lambda+\beta)v.}
 \end{equation}
 Moreover, the module $M_{\alpha,\beta}$ is irreducible if and only if $\alpha$ is non-zero. The module $M_{0,\beta}$ contains a unique nontrivial submodule $ (\partial+\beta)M_{0,\beta}$ isomorphic to $M_{1,\beta}$. The modules $M_{\alpha,\beta}$ with $\alpha\neq 0$ exhaust all finite irreducible nontrivial $Vir$-modules.
 \end{proposition}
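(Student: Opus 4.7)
The plan is to argue the three assertions separately: first classify all free rank-one modules via the Jacobi identity, then analyze their submodule structure to identify which are irreducible, and finally invoke the extended annihilation algebra to rule out irreducible modules of higher rank.

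For the classification, let $M = \mathbb{C}[\partial]v$ be a free rank-one module and write $L_\lambda v = f(\lambda,\partial)v$ for some $f \in \mathbb{C}[\lambda,\partial]$. Applying conformal sesquilinearity and the Jacobi identity to $[L_\lambda[L_\mu v]] - [L_\mu[L_\lambda v]] = [[L_\lambda L]_{\lambda+\mu}v]$, together with $[L_\lambda L] = (\partial+2\lambda)L$, yields the functional equation
\begin{equation*}
f(\mu,\partial+\lambda)f(\lambda,\partial) - f(\lambda,\partial+\mu)f(\mu,\partial) = (\lambda-\mu)f(\lambda+\mu,\partial).
\end{equation*}
Comparing $\partial$-degrees on both sides forces $f$ to be affine in $\partial$; writing $f(\lambda,\partial) = p(\lambda)\partial + q(\lambda)$ and matching coefficients then forces $p \equiv 1$ and $q(\lambda) = \alpha\lambda + \beta$ for some $\alpha,\beta \in \mathbb{C}$, producing exactly the family $M_{\alpha,\beta}$.

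For the submodule structure, take a nonzero submodule $N \subseteq M_{\alpha,\beta}$ and pick $g(\partial)v \in N$ of minimal $\partial$-degree. Expanding $L_\lambda(g(\partial)v) = g(\partial+\lambda)(\partial+\alpha\lambda+\beta)v$ and reading off the coefficient of $\lambda$ gives an element of $N$ whose degree analysis, when $\alpha \neq 0$, forces $g$ to be a nonzero constant and hence $N = M_{\alpha,\beta}$. When $\alpha = 0$, the identity $L_\lambda((\partial+\beta)v) = (\partial+\lambda+\beta)(\partial+\beta)v$ shows directly that $(\partial+\beta)M_{0,\beta}$ is a submodule; the assignment sending the generator of $M_{1,\beta}$ to $(\partial+\beta)v$ is a module isomorphism, and the one-dimensional quotient $M_{0,\beta}/(\partial+\beta)M_{0,\beta}$ carries the trivial action, which guarantees uniqueness of this submodule.

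Finally, to show the $M_{\alpha,\beta}$ with $\alpha\neq 0$ exhaust all finite nontrivial irreducible $Vir$-modules, I would invoke Lemma \ref{ll1}: such an $M$ is the same as a conformal module over the extended annihilation algebra, which for $Vir$ is (up to $\mathbb{C}\partial$) the positive-part Witt-type Lie algebra $\bigoplus_{n\geq -1}\mathbb{C}L_n$ on which the $L_n$ with $n \gg 0$ act locally nilpotently. Such an $M$ is free over $\mathbb{C}[\partial]$ by the torsion lemma above, and a highest-weight analysis using the $L_0$-grading together with the locally finite action of the $L_n$, $n\geq 1$, produces a cyclic generator on which $L_\lambda$ acts by $\partial + \alpha\lambda + \beta$; the rank-one analysis above then identifies $M \cong M_{\alpha,\beta}$ with $\alpha \neq 0$. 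The main obstacle lies in precisely this last step: the rank-one computation and submodule analysis are essentially polynomial manipulation, but ruling out higher-rank finite irreducible modules genuinely requires the representation theory of the annihilation algebra, which is where the Cheng--Kac framework does the heavy lifting.
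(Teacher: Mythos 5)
The paper offers no proof of this proposition at all: it is quoted verbatim from Cheng--Kac \cite{CK} with the phrase ``it is well known that,'' so there is nothing internal to compare against except the machinery the paper later deploys for $\mathcal{W}(a,b)$ in Theorem \ref{t1}. Your reconstruction is the standard argument and is correct in outline; the functional equation, the $\partial$-degree bound forcing $f=\partial+\alpha\lambda+\beta$, and the identification $(\partial+\beta)M_{0,\beta}\cong M_{1,\beta}$ are all right. Two small repairs are needed. First, in the irreducibility step the coefficient of $\lambda^{1}$ in $g(\partial+\lambda)(\partial+\alpha\lambda+\beta)v$ is $\bigl(g'(\partial)(\partial+\beta)+\alpha g(\partial)\bigr)v$, which generically has the \emph{same} $\partial$-degree as $g$, so it does not drive an induction; the clean move is to take the top coefficient, of $\lambda^{\deg g+1}$, which equals $\alpha g_{m}v$ with $g_m$ the leading coefficient of $g$ and puts $v\in N$ at once when $\alpha\neq0$. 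Second, ``the quotient is one-dimensional and trivial'' does not by itself give uniqueness of the submodule of $M_{0,\beta}$: you also need that $(\partial+\beta)M_{0,\beta}\cong M_{1,\beta}$ is irreducible and that a free $\mathbb{C}[\partial]$-module has no nonzero finite-dimensional $\mathbb{C}[\partial]$-submodules, so any nonzero submodule $N$ meets $(\partial+\beta)M_{0,\beta}$ nontrivially, hence contains it, hence equals it or all of $M_{0,\beta}$.

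For the exhaustion statement you, like the paper, ultimately defer to \cite{CK}; that is acceptable, but note that the missing ingredients are exactly the ones the paper spells out for $\mathcal{W}(a,b)$: the filtration $\mathcal{L}_n$, finite-dimensionality of the space of singular vectors, solvability of $\mathcal{L}_0/\mathcal{L}_N$ and Lie's theorem to produce an eigenvector $v$ with $L_nv=0$ for $n\geq1$, the PBW decomposition $V=\sum\mathbb{C}\partial^iL_{-1}^jv$, and crucially the fact that $\partial-L_{-1}$ is central in $Lie(Vir)^e$ so that Schur's lemma forces $L_{-1}v=(\partial+\beta)v$ and hence $V=\mathbb{C}[\partial]v$. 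Your phrase ``highest-weight analysis produces a cyclic generator on which $L_\lambda$ acts by $\partial+\alpha\lambda+\beta$'' asserts the conclusion of precisely this centrality-plus-Schur step rather than deriving it; that is the one place where your sketch is genuinely incomplete rather than merely terse.
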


Next, we study the extended annihilation algebra of $\mathcal{W}(a,b)$.
\begin{lemma}\label{l21}
The annihilation algebra of $\mathcal{W}(a,b)$ is $Lie(\mathcal{W}(a,b))^+= \sum \limits_{\mathclap {m\geq-1}}\mathbb{C}L_m\oplus\sum\limits_{\mathclap{n\geq 0}}\mathbb{C}W_n$ with the following Lie bracket:
\begin{equation}\label{eq2}
  [L_m,L_n]=(m-n)L_{m+n},~[L_m,W_n]=[(a-1)(m+1)-n]W_{m+n}+bW_{m+n+1},~[W_m,W_n]=0.
\end{equation}
And the extended annihilation algebra $Lie(\mathcal{W}(a,b))^e=Lie(\mathcal{W}(a,b))^+\rtimes \mathbb{C}\partial$ satisfying (\ref{eq2}) and
\begin{equation}\label{eq3}
  [\partial,L_m]=-(m+1)L_{m-1},\quad[\partial,W_n]=-nW_{n-1}.\\
\end{equation}
\end{lemma}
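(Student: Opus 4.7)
The plan is to extract the $j$-th products from the defining $\lambda$-brackets, plug them into the general bracket formula (\ref{106}) for $Lie(\mathcal{R})$, and then relabel the basis so the relations take the classical Witt form. From $[L_\lambda L]=(\partial+2\lambda)L$ we read off $L_{(0)}L=\partial L$, $L_{(1)}L=2L$, and $L_{(j)}L=0$ for $j\geq 2$. From $[L_\lambda W]=(\partial+a\lambda+b)W$ we get $L_{(0)}W=(\partial+b)W$, $L_{(1)}W=aW$, and $L_{(j)}W=0$ for $j\geq 2$. Finally $[W_\lambda W]=0$ gives $W_{(j)}W=0$ for every $j\geq 0$.

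Next I would introduce the indexing convention $L_m:=L_{(m+1)}$ for $m\geq -1$ and $W_n:=W_{(n)}$ for $n\geq 0$; the shift for $L$ is natural because the Virasoro annihilation algebra is the Witt algebra with generators indexed from $-1$. With this convention, plugging into (\ref{106}) and using $(\partial a)_{(k)}=-k a_{(k-1)}$ gives
\begin{align*}
[L_m,L_n]&=(L_{(0)}L)_{(m+n+1)}+(m+1)(L_{(1)}L)_{(m+n)}\\
&=-(m+n+2)L_{(m+n+1)}+2(m+1)L_{(m+n+1)}=(m-n)L_{m+n},
\end{align*}
\begin{align*}
[L_m,W_n]&=(L_{(0)}W)_{(m+n+1)}+(m+1)(L_{(1)}W)_{(m+n)}\\
&=\bigl(-(m+n+1)+(m+1)a\bigr)W_{(m+n)}+bW_{(m+n+1)}\\
&=\bigl((a-1)(m+1)-n\bigr)W_{m+n}+bW_{m+n+1},
\end{align*}
and $[W_m,W_n]=0$ since all $W_{(j)}W$ vanish. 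The truncation of the sum in (\ref{106}) is automatic because only $j=0,1$ contribute to brackets involving $L$.

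For the extended annihilation algebra, I would just apply the defining action $\partial(a_{(n)})=-na_{(n-1)}$ directly: $[\partial,L_m]=\partial(L_{(m+1)})=-(m+1)L_{(m)}=-(m+1)L_{m-1}$, and similarly $[\partial,W_n]=-nW_{n-1}$, which are exactly the relations (\ref{eq3}).

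There is no real obstacle here; the proof is a bookkeeping exercise. The only subtle point worth flagging carefully is the index shift $L_m=L_{(m+1)}$ versus $W_n=W_{(n)}$: without it the formulas would have awkward shifts on the right-hand side, and one has to verify that the lower bound $m\geq -1$ (rather than $m\geq 0$) is exactly what results from imposing $m+1\geq 0$ under this relabeling.
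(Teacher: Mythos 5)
Your proposal is correct and follows essentially the same route as the paper: read off the $j$-th products from the $\lambda$-brackets, substitute into (\ref{106}), and relabel $L_m=L_{(m+1)}$, $W_n=W_{(n)}$. The only blemish is a typographical slip in the intermediate line for $[L_m,L_n]$, where the subscripts should read $(L_{(0)}L)_{(m+n+2)}+(m+1)(L_{(1)}L)_{(m+n+1)}$; your subsequent evaluation uses the correct values, so the final formulas are unaffected.
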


\begin{proof}
By the definitions of the $j$-th product and $\mathcal{W}(a,b)$, we have
\begin{align*}
   & L_{(0)}L=\partial L,\quad L_{(1)}L=2L,\quad L_{(0)}W=(\partial+b)W,\quad L_{(1)}W=aW,\\
   & L_{(j)}L=L_{(j)}W=W_{(j-2)}W=0,\quad \forall j\geq2.
\end{align*}
Then (\ref{106}) implies for all $m$, $n$ $\in\mathbb{Z^{+}}$ that
\begin{align*}
  &[L_{(m)}, L_{(n)}]=\sum_{j \in\mathbb{Z^{+}}}\binom{m}{j}(L_{(j)}L)_{(m+n-j)}=(m-n)L_{(m+n-1)},\\
  &[L_{(m)}, W_{(n)}]=\sum_{j \in\mathbb{Z^{+}}}\binom{m}{j}(L_{(j)}W)_{(m+n-j)}=[(a-1)m-n]W_{(m+n-1)}+bW_{(m+n)},\\
  &[W_{(m)}, W_{(n)}]=0.
\end{align*}
Setting $L_m=L_{(m+1)}$ for $m\geq-1$ and $W_n=W_{(n)}$ for $n\geq0$, we obtain (\ref{eq2}) immediately. By setting $\partial=-\partial_t$, then (\ref{eq3}) follows. And the extended annihilation algebra is $Lie(\mathcal{W}(a,b))^e=Lie(\mathcal{W}(a,b))^+\rtimes \mathbb{C}\partial$.
\end{proof}

Denote $\mathcal{L}_{\mathcal{W}}(a,b)=Lie(\mathcal{W}(a,b))^e$. Define $\mathcal{L}_{\mathcal{W}}(a,b)_n=\sum \limits_{\mathclap {i\geq n}}(\mathbb{C}L_i\oplus\mathbb{C}W_i)$. Then we obtain a filtration of subalgebras of $\mathcal{L}_{\mathcal{W}}(a,b)$:
\begin{equation*}
  \mathcal{L}_{\mathcal{W}}(a,b)\supset \mathcal{L}_{\mathcal{W}}(a,b)_{-1}\supset \mathcal{L}_{\mathcal{W}}(a,b)_0\supset\cdot\cdot\cdot\supset\cdot\cdot\cdot\supset \mathcal{L}_{\mathcal{W}}(a,b)_{n}\supset\cdot\cdot\cdot,
\end{equation*}
where we set $W_{-1}=0$. Note that $\mathcal{L}_{\mathcal{W}}(a,b)_{-1}= Lie(\mathcal{W}(a,b))^{+}$.

Then we can obtain some simple results about the above filtration of subalgebras of $\mathcal{L}_{\mathcal{W}}(a,b)$.
\begin{lemma}\label{l22}
(1) For $m,n\in\mathbb{Z^+},[\mathcal{L}_{\mathcal{W}}(a,b)_{m},\mathcal{L}_{\mathcal{W}}(a,b)_{n}]\subset\mathcal{L}_{\mathcal{W}}(a,b)_{m+n}$. And $\mathcal{L}_{\mathcal{W}}(a,b)_{n}$ is an ideal of $\mathcal{L}_{\mathcal{W}}(a,b)_{0}$ for all $n\in\mathbb{Z^+}$.\\
(2) $[\partial,\mathcal{L}_{\mathcal{W}}(a,b)_{n}]=\mathcal{L}_{\mathcal{W}}(a,b)_{n-1}$ for all $n\in\mathbb{Z^+} $.
\end{lemma}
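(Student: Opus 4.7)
The plan is a direct bookkeeping computation from the bracket formulas (\ref{eq2}) and (\ref{eq3}) established in Lemma \ref{l21}. Both parts reduce to checking the claim on the spanning elements $L_i$ and $W_j$, after which linearity closes everything out.

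For part (1), I would take a basis element from $\mathcal{L}_{\mathcal{W}}(a,b)_m$ (so either $L_i$ with $i\geq m$ or $W_i$ with $i\geq m$, where $W_{-1}=0$) and one from $\mathcal{L}_{\mathcal{W}}(a,b)_n$, and split into three cases. The bracket $[L_i,L_j]=(i-j)L_{i+j}$ lands in $\mathbb{C}L_{i+j}$ with $i+j\geq m+n$; the bracket $[L_i,W_j]=[(a-1)(i+1)-j]W_{i+j}+bW_{i+j+1}$ lands in $\mathbb{C}W_{i+j}+\mathbb{C}W_{i+j+1}$, both of whose indices are $\geq m+n$; and $[W_i,W_j]=0$. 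This gives the filtration property, and setting $m=0$ immediately yields that $\mathcal{L}_{\mathcal{W}}(a,b)_n$ is an ideal of $\mathcal{L}_{\mathcal{W}}(a,b)_0$.

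For part (2), I would verify both containments. The inclusion $[\partial,\mathcal{L}_{\mathcal{W}}(a,b)_n]\subseteq \mathcal{L}_{\mathcal{W}}(a,b)_{n-1}$ is immediate from (\ref{eq3}): $[\partial,L_i]=-(i+1)L_{i-1}$ and $[\partial,W_i]=-iW_{i-1}$ lower the index by exactly one. For the reverse inclusion, given any generator $L_j$ with $j\geq n-1$, we have $L_j=-\tfrac{1}{j+2}[\partial,L_{j+1}]$ with $j+1\geq n$ (note $j+2\neq 0$ since $j\geq -1$); given any $W_j$ with $j\geq n-1$ (and $j\geq 0$ since $W_{-1}=0$), we have $W_j=-\tfrac{1}{j+1}[\partial,W_{j+1}]$ with $j+1\geq \max(n,1)\geq n$. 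Hence every generator of $\mathcal{L}_{\mathcal{W}}(a,b)_{n-1}$ lies in $[\partial,\mathcal{L}_{\mathcal{W}}(a,b)_n]$, giving equality.

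There is no real obstacle here; the only subtlety worth flagging is the boundary case $n=0$ in part (2), where one must check that $[\partial,\mathcal{L}_{\mathcal{W}}(a,b)_0]$ actually reaches $L_{-1}$ (it does, via $[\partial,L_0]=-L_{-1}$) and that the absence of any $W_{-1}$ term on the right-hand side is consistent with $[\partial,W_0]=0$. Once these are confirmed, the proof is complete.
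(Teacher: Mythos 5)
Your proof is correct and is precisely the direct verification from the bracket relations (\ref{eq2}) and (\ref{eq3}) that the paper intends; the paper in fact omits the proof of this lemma entirely, treating it as immediate, and your argument (including the careful handling of the surjectivity in part (2) and the $W_{-1}=0$ convention) fills in exactly those routine checks.
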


\begin{lemma}\label{l23}
(1) If $a\neq0, a\neq1, b \in \mathbb{C}, [\mathcal{L}_{\mathcal{W}}(a,b)_{0},\mathcal{L}_{\mathcal{W}}(a,b)_{0}]=\mathbb{C}W_0+\mathcal{L}_{\mathcal{W}}(a,b)_{1}$.\\
(2) If $a=1, b \in \mathbb{C}, [\mathcal{L}_{\mathcal{W}}(a,b)_{0},\mathcal{L}_{\mathcal{W}}(a,b)_{0}]=\mathcal{L}_{\mathcal{W}}(a,b)_{1}$.\\
(3) If $a=0, b \in \mathbb{C}, [\mathcal{L}_{\mathcal{W}}(a,b)_{0},\mathcal{L}_{\mathcal{W}}(a,b)_{0}]\subset\mathbb{C}W_0+\mathcal{L}_{\mathcal{W}}(a,b)_{1}$.
\end{lemma}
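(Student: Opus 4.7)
The plan is to work directly from the explicit bracket relations in Lemma \ref{l21}, proving all three parts together by first computing every commutator $[X_m, Y_n]$ with $X, Y \in \{L, W\}$ and $m, n \geq 0$, and then extracting basis elements by small linear combinations. The universal inclusion $[\mathcal{L}_{\mathcal{W}}(a,b)_0, \mathcal{L}_{\mathcal{W}}(a,b)_0] \subseteq \mathbb{C} W_0 + \mathcal{L}_{\mathcal{W}}(a,b)_1$ is immediate: $[W_m, W_n] = 0$; $[L_m, L_n] = (m-n)L_{m+n}$ vanishes when $m = n$ and otherwise has $m+n \geq 1$; and $[L_m, W_n] \in \mathbb{C} W_{m+n} + \mathbb{C} W_{m+n+1}$, which lies in $\mathcal{L}_{\mathcal{W}}(a,b)_1$ unless $m = n = 0$, in which case it equals $(a-1)W_0 + bW_1 \in \mathbb{C} W_0 + \mathcal{L}_{\mathcal{W}}(a,b)_1$. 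This observation alone establishes (3).

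The reverse inclusions in (1) and (2) both rest on the telescoping identity
\[
[L_m, W_n] - [L_{m-1}, W_{n+1}] = a\, W_{m+n}, \qquad m \geq 1,\; n \geq 0,
\]
obtained from (\ref{eq2}) by noting that the $bW_{m+n+1}$-terms cancel and the two $W_{m+n}$-coefficients collapse to $a$. Whenever $a \neq 0$, setting $m+n = k$ exhibits $W_k \in [\mathcal{L}_{\mathcal{W}}(a,b)_0, \mathcal{L}_{\mathcal{W}}(a,b)_0]$ for every $k \geq 1$, while the Virasoro bracket $[L_0, L_k] = -k L_k$ delivers each $L_k$ with $k \geq 1$. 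For part (1), where additionally $a \neq 1$, the relation $[L_0, W_0] = (a-1)W_0 + bW_1$ together with $W_1 \in [\mathcal{L}_{\mathcal{W}}(a,b)_0, \mathcal{L}_{\mathcal{W}}(a,b)_0]$ then allows me to solve for $W_0$. For part (2), where $a = 1$, the same identity still recovers every $W_k$ with $k \geq 1$, but now $[L_0, W_0] = bW_1 \in \mathcal{L}_{\mathcal{W}}(a,b)_1$, so no $W_0$ component ever appears in any commutator and the equality with $\mathcal{L}_{\mathcal{W}}(a,b)_1$ is sharp.

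I do not foresee a genuine obstacle; the proof becomes a direct calculation once the telescoping identity is spotted. The one point worth keeping in mind is that in (3) only an inclusion is claimed, and this is essentially sharp: when $a = 0$ and $b \neq 0$ the mixed brackets reduce to the linked vectors $-(k+1)W_k + bW_{k+1}$, from which no individual $W_k$ can be isolated by a finite linear combination, so no stronger conclusion is available in that case.
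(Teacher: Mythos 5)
Your proof is correct and follows the same route the paper intends: a direct computation from the bracket relations of Lemma \ref{l21}, which the paper leaves as ``immediate.'' The telescoping identity $[L_m,W_n]-[L_{m-1},W_{n+1}]=a\,W_{m+n}$ is exactly the right way to make the reverse inclusions explicit, and your case analysis (including why $W_0$ never appears when $a=1$ and why only an inclusion holds when $a=0$) is accurate.
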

\begin{proof}
By the relations of $\mathcal{L}_{\mathcal{W}}(a,b)$ in Lemma \ref{l21}, we can obtain the results immediately.
\end{proof}

\begin{lemma}\label{ll3}
For a fixed $N \in\mathbb{Z^+},\mathcal{L}_{\mathcal{W}}(a,b)_0/\mathcal{L}_{\mathcal{W}}(a,b)_N$ is a finite-dimensional solvable Lie algebra.
\end{lemma}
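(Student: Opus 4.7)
The plan is to exploit the filtration $\bar{\mathcal{L}}_0 \supset \bar{\mathcal{L}}_1 \supset \cdots \supset \bar{\mathcal{L}}_N = 0$ of the quotient $\bar{\mathcal{L}} := \mathcal{L}_{\mathcal{W}}(a,b)_0 / \mathcal{L}_{\mathcal{W}}(a,b)_N$, where $\bar{\mathcal{L}}_k$ denotes the image of $\mathcal{L}_{\mathcal{W}}(a,b)_k$ under the canonical projection. Both conclusions will then follow almost immediately from Lemmas \ref{l22} and \ref{l23}.

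Finite-dimensionality is just a basis count: since $\mathcal{L}_{\mathcal{W}}(a,b)_k$ is the $\mathbb{C}$-span of $\{L_i, W_i \mid i \geq k\}$ for $k \geq 0$, the quotient $\bar{\mathcal{L}}$ admits the cosets of $L_0, W_0, L_1, W_1, \ldots, L_{N-1}, W_{N-1}$ as a basis, so $\dim \bar{\mathcal{L}} = 2N$.

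For solvability, I would trace the derived series $\bar{\mathcal{L}}^{(k)}$ inside the filtration. Lemma \ref{l22}(1) descends to the quotient as $[\bar{\mathcal{L}}_m, \bar{\mathcal{L}}_n] \subset \bar{\mathcal{L}}_{m+n}$, and in all three parameter cases Lemma \ref{l23} yields $\bar{\mathcal{L}}^{(1)} \subset \mathbb{C}\bar{W}_0 + \bar{\mathcal{L}}_1$. Combining $[W_0, W_0] = 0$ with the inclusions $[\mathcal{L}_0, \mathcal{L}_1] \subset \mathcal{L}_1$ and $[\mathcal{L}_1, \mathcal{L}_1] \subset \mathcal{L}_2$ from Lemma \ref{l22}(1), a short check gives $[\mathbb{C}\bar{W}_0 + \bar{\mathcal{L}}_1,\, \mathbb{C}\bar{W}_0 + \bar{\mathcal{L}}_1] \subset \bar{\mathcal{L}}_1$, hence $\bar{\mathcal{L}}^{(2)} \subset \bar{\mathcal{L}}_1$. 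From here Lemma \ref{l22}(1) alone yields, by induction, $\bar{\mathcal{L}}^{(k+2)} \subset \bar{\mathcal{L}}_{2^k}$; once $2^k \geq N$ this vanishes, and solvability is established.

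The only step requiring care is the absorption of the extra $\mathbb{C}\bar{W}_0$ term into $\bar{\mathcal{L}}_1$ after one further commutator, but that is handled uniformly in the parameters $a, b$ by $[W_0, W_0] = 0$ together with the filtration compatibility of the bracket. Everything else is a direct application of the two preceding lemmas, so I do not expect any genuine obstacle.
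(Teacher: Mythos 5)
Your proof is correct and follows essentially the same route as the paper's: use Lemma \ref{l23} to get $\bar{\mathcal{L}}^{(1)}\subset \mathbb{C}\bar W_0+\bar{\mathcal{L}}_1$, absorb the $\bar W_0$ term after one more commutator, and then let the filtration compatibility of Lemma \ref{l22} drive the derived series into $\bar{\mathcal{L}}_N=0$. The only cosmetic difference is that you track the sharper bound $\bar{\mathcal{L}}^{(k+2)}\subset\bar{\mathcal{L}}_{2^k}$, whereas the paper records the weaker (and equally sufficient) $\mathcal{L}_{\mathcal{W}}(a,b)_0^{(n+1)}\subset\mathcal{L}_{\mathcal{W}}(a,b)_n$.
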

\begin{proof}
Obviously, $\mathcal{L}_{\mathcal{W}}(a,b)_0/\mathcal{L}_{\mathcal{W}}(a,b)_N$ is a finite-dimensional Lie algebra. Therefore, we only need to prove that it is solvable. Denote the derived subalgebras of $\mathcal{L}_{\mathcal{W}}(a,b)_0$ by $\mathcal{L}_{\mathcal{W}}(a,b)_0^{(0)}=\mathcal{L}_{\mathcal{W}}(a,b)_0$ and $\mathcal{L}_{\mathcal{W}}(a,b)_0^{(n+1)}=[\mathcal{L}_{\mathcal{W}}(a,b)_0^{(n)},\mathcal{L}_{\mathcal{W}}(a,b)_0^{(n)}]$ for $n \in \mathbb{Z^{+}}$.

 No matter which case of Lemma \ref{l23}, we can get  $\mathcal{L}_{\mathcal{W}}(a,b)_0^{(1)}=[\mathcal{L}_{\mathcal{W}}(a,b)_0,\mathcal{L}_{\mathcal{W}}(a,b)_0]\subset\mathbb{C}W_0+\mathcal{L}_{\mathcal{W}}(a,b)_{1}$. Since $[W_0,\mathcal{L}_{\mathcal{W}}(a,b)_1]\subset\mathcal{L}_{\mathcal{W}}(a,b)_1$, we obtain $\mathcal{L}_{\mathcal{W}}(a,b)_0^{(2)}\subset\mathcal{L}_{\mathcal{W}}(a,b)_1$. By Lemma \ref{l22} and calculating this derived subalgebras, we get $\mathcal{L}_{\mathcal{W}}(a,b)_0^{(n+1)}\subset\mathcal{L}_{\mathcal{W}}(a,b)_n$ for all $n\in \mathbb{Z^+}$. Then we obtain this conclusion.
\end{proof}
~

\subsection{Irreducible rank one modules over $\mathcal{W}(a,b)$}~

In this section, we first give a characterization of  rank one $\mathcal{W}(a,b)$-modules.

\begin{proposition}\label{pr2}
 If $a\neq 1$ or $b\neq 0$, all free nontrivial $\mathcal{W}(a,b)$-modules of rank one over $\mathbb{C}[\partial]$ are as follows:
  \begin{equation*}
  \xymatrix{M_{\alpha,\beta}=\mathbb{C}[\partial]v,\quad L_\lambda v=(\partial+\alpha\lambda+\beta)v,\quad W_\lambda v=0,\quad for\ some\ \alpha,\beta \in \mathbb{C}.}
 \end{equation*}
 In addition, all free nontrivial $\mathcal{W}(1,0)$-modules of rank one over $\mathbb{C}[\partial]$ are as follows:
 \begin{equation*}
  \xymatrix{M_{\alpha,\beta,\gamma}=\mathbb{C}[\partial]v,\quad L_\lambda v=(\partial+\alpha\lambda+\beta)v,\quad W_\lambda v=\gamma v,\quad for\ some\ \alpha,\beta,\gamma \in \mathbb{C}.}
 \end{equation*}
 \end{proposition}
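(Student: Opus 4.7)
The plan is to determine both the $L$- and $W$-actions on a free rank-one generator $v$. Since $\mathbb{C}[\partial]L$ is a copy of $Vir$ inside $\mathcal{W}(a,b)$, $M$ restricts to a rank-one $Vir$-module. I would first show that $Vir$ cannot act trivially on $v$: if $L_\lambda v = 0$, then writing $W_\lambda v = g(\partial, \lambda)v$ with $g \in \mathbb{C}[\partial, \lambda]$, the Jacobi identity $L_\lambda(W_\mu v) - W_\mu(L_\lambda v) = [L_\lambda W]_{\lambda+\mu}v$ collapses to $((a-1)\lambda - \mu + b)\,g(\partial, \lambda + \mu) = 0$, and since the coefficient of $\mu$ on the left is $-1$ the polynomial on the left is never identically zero, forcing $g \equiv 0$ and contradicting the nontriviality of $M$. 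Hence Proposition \ref{pr1} supplies $\alpha, \beta \in \mathbb{C}$ with $L_\lambda v = (\partial + \alpha\lambda + \beta)v$, and it only remains to pin down $g$.

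Substituting both actions into the Jacobi identity yields the master functional equation
\begin{equation*}
g(\partial+\lambda, \mu)(\partial + \alpha\lambda + \beta) - (\partial + \mu + \alpha\lambda + \beta)\, g(\partial, \mu) = ((a-1)\lambda - \mu + b)\, g(\partial, \lambda + \mu).
\end{equation*}
Setting $\mu = 0$ and writing $h(\partial) := g(\partial, 0)$ reduces this to
\begin{equation*}
(\partial + \alpha\lambda + \beta)\bigl[h(\partial + \lambda) - h(\partial)\bigr] = ((a-1)\lambda + b)\, g(\partial, \lambda). \qquad (\ast)
\end{equation*}
If $b \neq 0$, setting $\lambda = 0$ in $(\ast)$ gives $b\,h = 0$, hence $h = 0$, and then $(\ast)$ forces $((a-1)\lambda + b)\,g = 0$, whence $g = 0$. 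If $b = 0$ and $a \neq 1$, I would combine $(\ast)$ with $[W_\lambda W] = 0$, which on $v$ reads $g(\partial+\lambda, \mu)\,g(\partial, \lambda) = g(\partial+\mu, \lambda)\,g(\partial, \mu)$; at $\mu = 0$ this becomes $g(\partial, \lambda)[h(\partial+\lambda) - h(\partial)] = 0$, so either $g \equiv 0$ or $h$ is constant, and in the latter case $(\ast)$ yields $(a-1)\lambda\,g = 0$ and again $g = 0$. In both subcases $W_\lambda v = 0$, producing the modules $M_{\alpha, \beta}$.

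The main obstacle is the remaining case $(a, b) = (1, 0)$, where $(\ast)$ only says that $h = \gamma$ is a constant and leaves the higher $\lambda$-coefficients of $g$ undetermined. To finish, I would write $g(\partial, \mu) = \gamma + \mu\,\tilde g(\partial, \mu)$ and substitute into the master equation; the constant contribution $\gamma\bigl[(\partial + \alpha\lambda + \beta) - (\partial + \mu + \alpha\lambda + \beta)\bigr] = -\gamma\mu$ on the left cancels exactly the $-\mu\gamma$ produced on the right, and after dividing by $\mu$ one obtains
\begin{equation*}
\tilde g(\partial+\lambda, \mu)(\partial + \alpha\lambda + \beta) - (\partial + \mu + \alpha\lambda + \beta)\,\tilde g(\partial, \mu) = -(\lambda + \mu)\,\tilde g(\partial, \lambda+\mu).
\end{equation*}
Setting $\mu = 0$ in this equation and then dividing by $\lambda$ and letting $\lambda \to 0$ produces the ODE $\tilde g(\partial, 0) = -(\partial + \beta)\,\tfrac{d}{d\partial}\tilde g(\partial, 0)$, whose only polynomial solution is zero (comparing leading coefficients gives $a_n = -n a_n$, forcing $a_n = 0$). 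Feeding $\tilde g(\partial, 0) = 0$ back into the displayed identity then forces $\tilde g \equiv 0$, so $g(\partial, \lambda) = \gamma$. A direct verification that $L_\lambda v = (\partial + \alpha\lambda + \beta)v$ together with $W_\lambda v = \gamma v$ satisfy every module axiom (with $[W_\lambda W] = 0$ automatic since $W$ acts by a scalar) then exhibits the family $M_{\alpha, \beta, \gamma}$ and completes the classification.
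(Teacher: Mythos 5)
Your argument is correct, and it reaches the classification by a noticeably different route from the paper's. The paper first exploits $[W_\lambda W]=0$: from $W_\lambda(W_\mu v)=W_\mu(W_\lambda v)$ it gets $g(\partial+\lambda,\mu)g(\partial,\lambda)=g(\partial+\mu,\lambda)g(\partial,\mu)$ and a degree count in $\mu$ shows $\deg_\partial g=0$, so $g=g(\lambda)$ from the start; then the $L$--$W$ Jacobi identity collapses to the one-variable equation $\bigl((a-1)\lambda-\mu+b\bigr)g(\lambda+\mu)=-\mu g(\mu)$, which settles all values of $(a,b)$ uniformly in one stroke. You instead keep $g(\partial,\mu)$ two-variable, specialize the master equation at $\mu=0$, and split into cases: for $b\neq 0$ the specialization alone kills $g$; for $b=0$, $a\neq 1$ you invoke $W$-commutativity only at $\mu=0$ (either $g\equiv 0$ or $h$ constant, then $(\ast)$ finishes); and for $(a,b)=(1,0)$ you use the factorization $g=\gamma+\mu\tilde g$, the divided identity, and the leading-coefficient argument for the ODE $\tilde h=-(\partial+\beta)\tilde h'$ to force $\tilde g\equiv 0$ -- all steps I checked are valid polynomial manipulations (the divisions by $\mu$ and $\lambda$ are legitimate since both sides are divisible, and the ``$\lambda\to 0$'' step is just evaluation of a polynomial at $\lambda=0$). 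What the paper's approach buys is brevity and uniformity: once $g$ is $\partial$-free, every case is read off from one equation. What yours buys is a demonstration that the $L$--$W$ compatibility alone does almost all the work, with $[W_\lambda W]=0$ needed only in the single subcase $b=0$, $a\neq1$, and you also make explicit the converse verification that the listed actions really are modules, which the paper leaves implicit.
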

\begin{proof} Assume that
\begin{equation}
  L_\lambda v=f(\partial,\lambda)v,\qquad W_\lambda v=g(\partial,\lambda)v,
\end{equation}
 where $f(\partial,\lambda),g(\partial,\lambda)\in \mathbb{C}[\partial,\lambda]$.
 Since $\mathbb{C}[\partial]L$ is the Virasoro Lie conformal algebra, we obtain that $f(\partial,\lambda)=0$ or $f(\partial,\lambda)=\partial+\alpha\lambda+\beta$ for some $\alpha,\beta \in \mathbb{C}$ by Proposition \ref{pr1}. Since $[W_\lambda W]=0$, we have $W_\lambda(W_\mu v)=W_\mu(W_\lambda v)$. Therefore, we can obtain $g(\partial+\lambda,\mu)g(\partial,\lambda)=g(\partial+\mu,\lambda)g(\partial,\mu)$, which implies $\text{deg}_\partial g(\partial,\lambda)+\text{deg}_\lambda g(\partial,\lambda)=\text{deg}_\lambda g(\partial,\lambda)$, where $\text{deg}_\lambda g(\partial,\lambda)$ is the highest degree of $\lambda$ in $g(\partial,\lambda)$. Thus $\text{deg}_\partial g(\partial,\lambda)=0$, i.e.,  $g(\partial,\lambda)=g(\lambda)$ for some $g(\lambda) \in \mathbb{C}[\lambda]$. Finally, by $[L_\lambda W]_{\lambda+\mu}v=((\partial+a\lambda+b)W)_{\lambda+\mu}v$, we can get
 \begin{equation}\label{eq5}
   (-\lambda-\mu+a\lambda+b)g(\lambda+\mu)=g(\mu)f(\partial,\lambda)-f(\partial+\mu,\lambda)g(\mu).
 \end{equation}
 Obviously, if $f(\partial,\lambda)=0$, then $g(\lambda)=0$, which means that this module action is trivial. Therefore, $f(\partial,\lambda)=\partial+\alpha\lambda+\beta$. Taking this into (\ref{eq5}), we obtain
 \begin{equation}\label{eq6}
   ((a-1)\lambda-\mu+b)g(\lambda+\mu)=-\mu g(\mu).
 \end{equation}
 It is easy to check that $g(\mu)=\gamma$ for some $\gamma \in \mathbb{C}$. Then plugging it into (\ref{eq6}), we obtain $g(\partial,\lambda)=0$ if $a\neq1$ or $b\neq0$ and $g(\partial,\lambda)=\gamma$ for any $\gamma \in \mathbb{C}$ if $a=1$ and $b=0$.

 This completes the proof.

\end{proof}

Now we can discuss the irreducibility property of rank one $\mathcal{W}(a,b)$-modules given in Proposition \ref{pr2}.

Denote the module $M$ in Proposition \ref{pr2} by $M_{\alpha,\beta}$ (respectively, $M_{\alpha,\beta,\gamma}$) if $a\neq 1$ or $b\neq 0$ (respectively, $a=1$ and $b=0$).

\begin{proposition}\label{pr3}
(1) If $a\neq1$ or $ b\neq0$, $M_{\alpha,\beta}$ is an irreducible $\mathcal{W}(a,b)$-module if and only if $\alpha\neq0$.\\
(2) If $a=1$ and $ b=0$, $M_{\alpha,\beta,\gamma}$ is an irreducible $\mathcal{W}(a,b)$-module if and only if $\alpha\neq0$ or $\gamma\neq0$.
\end{proposition}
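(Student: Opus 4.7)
The plan is to reduce both parts to the $Vir$-module classification in Proposition \ref{pr1} wherever possible, and to handle the one genuinely new case by a direct Taylor-expansion computation with the $W$-action.

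For (1), when $a\neq 1$ or $b\neq 0$, the module $M_{\alpha,\beta}$ has $W_\lambda v=0$, so $W$ annihilates every element of $\mathbb{C}[\partial]v$. A $\mathbb{C}[\partial]$-submodule of $M_{\alpha,\beta}$ is therefore $\mathcal{W}(a,b)$-stable if and only if it is $Vir$-stable, and Proposition \ref{pr1} immediately yields the equivalence with $\alpha\neq 0$.

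For (2) I would split according to whether $\gamma$ vanishes. If $\gamma=0$ the previous reduction to $Vir$ applies verbatim, giving irreducibility iff $\alpha\neq 0$, which matches the stated criterion ``$\alpha\neq 0$ or $\gamma\neq 0$'' in this sub-case. If $\gamma\neq 0$ I claim $M_{\alpha,\beta,\gamma}$ is irreducible for every $\alpha$. Indeed, any nonzero $\mathbb{C}[\partial]$-submodule $N$ of the cyclic $\mathbb{C}[\partial]$-module $\mathbb{C}[\partial]v$ has the form $N=p(\partial)\mathbb{C}[\partial]v$ for some monic $p$ of degree $n\geq 0$, and a conformal submodule must in particular be $W$-stable. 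Computing
\[
W_\lambda\bigl(p(\partial)v\bigr)=\gamma\,p(\partial+\lambda)v=\gamma\sum_{k=0}^{n}\frac{p^{(k)}(\partial)}{k!}\,\lambda^{k}\,v,
\]
the coefficient of $\lambda^n$ equals $\gamma\,v$ (since $p^{(n)}=n!$), so $\gamma v\in N$; because $\gamma\neq 0$ this forces $v\in N$ and hence $n=0$, i.e.\ $N=M_{\alpha,\beta,\gamma}$.

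The only delicate point is to be sure that in the degenerate situation $\alpha=0,\ \gamma\neq 0$ the extra $W$-action genuinely destroys the proper $Vir$-submodule $(\partial+\beta)M_{0,\beta}$ singled out in Proposition \ref{pr1}. The leading-$\lambda$-coefficient argument above accomplishes this in one stroke without further case analysis, so the whole substance of the proof is concentrated in that single computation; the rest is bookkeeping via Proposition \ref{pr1}.
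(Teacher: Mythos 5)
Your proof is correct and follows essentially the same route as the paper: part (1) and the $\gamma=0$ sub-case reduce to Proposition \ref{pr1}, and your leading $\lambda$-coefficient computation for $W_\lambda\bigl(p(\partial)v\bigr)=\gamma\,p(\partial+\lambda)v$ is exactly the argument the paper uses in its $\alpha=0,\ \gamma\neq 0$ case. The only (harmless) organizational difference is that you apply that computation uniformly for all $\gamma\neq 0$, whereas the paper instead invokes $Vir$-irreducibility when $\alpha\neq 0$.
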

\begin{proof}
(1) In this case, $W_\lambda v=0$. The irreducibility of  $M_{\alpha,\beta}$ as a $\mathcal{W}(a,b)$-module is equivalent to that as a $Vir$-module. Therefore, by Proposition \ref{pr1}, we obtain the conclusion.\\
(2) In this case, $W_\lambda v=\gamma v$.
If $\alpha=\gamma=0$, then $(\partial+\beta)v$ generates a proper submodule of $M_{0,\beta,0}$, which is exactly a $Vir$-module. Thus $M_{0,\beta,0}$ is a reducible $\mathcal{W}(1,0)$-module. By the theory of $Vir$-modules, $(\partial+\beta)M_{0,\beta,0}$ is the unique nontrivial submodule of $M_{0,\beta,0}$, and $(\partial+\beta)M_{0,\beta,0}\cong M_{1,\beta,0}$.

If $\alpha=0$, and $\gamma\neq0$, we assume that $V$ is a nonzero submodule of $M_{0,\beta,\gamma}$. There exists an element $u=f(\partial)v \in V$ for some nonzero $f(\partial) \in \mathbb{C}[\partial]$. If $\text{deg}f(\partial)=0$, then $v \in V$ and $V=M_{0,\beta,\gamma}$. If $\text{deg}f(\partial)=m>0$, then $W_\lambda u=f(\partial+\lambda)(W_\lambda v)=\gamma f(\partial+\lambda)v\in V[\lambda]$. The coefficient of $\lambda^m$ in $\gamma f(\partial+\lambda)v$ is a nonzero multiple of $v$, which implies $V=M_{0,\beta,\gamma}$. Thus, $V=M_{0,\beta,\gamma}$ is an irreducible $\mathcal{W}(1,0)$-module.

If $\alpha\neq0$, $M_{\alpha,\beta,\gamma}$ is an irreducible $Vir$-module. Therefore, it is also an irreducible $\mathcal{W}(a,b)$-module.
\end{proof}

\subsection{Classification of finite irreducible modules over $\mathcal{W}(a,b)$}~

Since $\mathcal{W}(a,b)$ is of finite rank as a $\mathbb{C}[\partial]$-module, we can obtain the following result immediately.
\begin{lemma}\label{ll2}
For a conformal module $V$ over $\mathcal{W}(a,b)$ and an element $v \in V$, there exists an integer $m \in \mathbb{Z^{+}}$ such that $\mathcal{L}_{\mathcal{W}}(a,b)_m\cdot v=0$.
\end{lemma}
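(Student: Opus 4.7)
The plan is to combine the finite generation of $\mathcal{W}(a,b)$ as a $\mathbb{C}[\partial]$-module with the locality property built into Lemma \ref{ll1}. Concretely, $\mathcal{W}(a,b) = \mathbb{C}[\partial]L \oplus \mathbb{C}[\partial]W$, so there are only two generators to worry about, and the bound on $m$ we seek will be the maximum of the two bounds produced for $L$ and $W$ individually.

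First I would invoke Lemma \ref{ll1} applied to $v \in V$ and to each of the generators $L$ and $W$ of $\mathcal{W}(a,b)$. This yields non-negative integers $N_L$ and $N_W$ such that $L_{(n)}\cdot v = 0$ for every $n\geq N_L$ and $W_{(n)}\cdot v = 0$ for every $n\geq N_W$. Next I would translate these vanishing statements back into the notation used to describe $Lie(\mathcal{W}(a,b))^+$ in Lemma \ref{l21}, namely $L_m = L_{(m+1)}$ (for $m\geq -1$) and $W_n = W_{(n)}$ (for $n\geq 0$). This gives $L_i\cdot v = 0$ whenever $i\geq N_L - 1$ and $W_i \cdot v = 0$ whenever $i\geq N_W$.

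Setting $m := \max\{N_L - 1,\; N_W,\; 0\}$, every basis element of $\mathcal{L}_{\mathcal{W}}(a,b)_m = \sum_{i\geq m}(\mathbb{C}L_i \oplus \mathbb{C}W_i)$ annihilates $v$, so $\mathcal{L}_{\mathcal{W}}(a,b)_m\cdot v = 0$, as required. There is no real obstacle here: the only point that requires any care is keeping straight the index shift between the abstract $j$-th products $a_{(j)}$ used in Lemma \ref{ll1} and the re-indexed generators $L_m, W_n$ of the annihilation algebra fixed at the end of the proof of Lemma \ref{l21}. The argument would fail only if $\mathcal{W}(a,b)$ had infinitely many $\mathbb{C}[\partial]$-generators, since then the maximum over the locality bounds of the generators might not exist; the hypothesis that $\mathcal{W}(a,b)$ has finite rank (here, rank two) is what makes the step legitimate.
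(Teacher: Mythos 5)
Your proof is correct and follows essentially the same route as the paper: apply Lemma \ref{ll1} to the two $\mathbb{C}[\partial]$-generators $L$ and $W$, take the maximum of the resulting locality bounds (with the index shift $L_m=L_{(m+1)}$), and conclude that $\mathcal{L}_{\mathcal{W}}(a,b)_m\cdot v=0$. The only difference is that you track the re-indexing explicitly, which the paper leaves implicit.
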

\begin{proof}
 By Lemma \ref{ll1}, $V$ is actually a module over $\mathcal{L}_{\mathcal{W}}(a,b)$ satisfying the following property: for each $v \in V$, there exists $N_1\geq-1$ and $N_2\geq0$ such that
 \begin{align*}
   L_m\cdot v=0, m\geq N_1,\\
   W_n\cdot v=0, n\geq N_2.
 \end{align*}
 Choosing $m=max\{N_1,N_2\}$, one can prove $L_n\cdot v= W_n\cdot v=0$, for all $n \geq m$. Thus $\mathcal{L}_{\mathcal{W}}(a,b)_m \cdot v=0$
\end{proof}

Through the above discussion, we can prove the main result of this article.\\

\begin{theorem}\label{t1}
Any finite nontrivial irreducible $\mathcal{W}(a,b)$-module M is free of rank one over $\mathbb{C}[\partial]$, and $M$ is isomorphic to $M_{\alpha,\beta}$ with $\alpha \neq 0$ if $a\neq 1$ or $b\neq 0$. In addition, any finite nontrivial irreducible $\mathcal{W}(1,0)$-module M is free of rank one over $\mathbb{C}[\partial]$, and $M$ is isomorphic to $M_{\alpha,\beta,\gamma}$ with $\alpha \neq 0$ or $\gamma \neq 0$.
\end{theorem}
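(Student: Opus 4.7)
The plan is to produce a nonzero vector $v\in M$ that behaves like the generator of a rank-one module, then show $M=\mathbb{C}[\partial]v$ and appeal to Proposition \ref{pr3}. Concretely, one wants $v$ satisfying $L_nv=W_nv=0$ for every $n\geq 1$ together with $L_0v=\alpha v$ and $W_0v=\gamma v$ for some scalars $\alpha,\gamma\in\mathbb{C}$; once such a $v$ is produced, the conformal action collapses to $L_\lambda v=L_{-1}v+\alpha\lambda v$ and $W_\lambda v=\gamma v$, and the only remaining issue is to show $L_{-1}v\in\mathbb{C}[\partial]v$.

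To produce $v$, first view $M$ as a module over the extended annihilation algebra $\mathcal{L}_{\mathcal{W}}(a,b)$ via Lemma \ref{ll1}. By Lemma \ref{ll2}, $M=\bigcup_{N\geq 0}M^N$ with $M^N=\{v\in M:\mathcal{L}_{\mathcal{W}}(a,b)_N\cdot v=0\}$, and each $M^N$ is $\mathcal{L}_{\mathcal{W}}(a,b)_0$-invariant (Lemma \ref{l22}). Choose the minimal $N$ with $M^N\neq 0$; since $\mathcal{L}_{\mathcal{W}}(a,b)_0/\mathcal{L}_{\mathcal{W}}(a,b)_N$ is finite-dimensional solvable (Lemma \ref{ll3}), carving out a finite-dimensional $\mathcal{L}_{\mathcal{W}}(a,b)_0$-invariant subspace of $M^N$ (naturally via the locally finite action of $L_0$) and applying Lie's theorem will yield a common eigenvector $v$ for all of $\mathcal{L}_{\mathcal{W}}(a,b)_0$. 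Writing $L_nv=c_nv$, $W_nv=d_nv$ and substituting into Lemma \ref{l21}, the identity $[L_0,L_n]\cdot v=0$ forces $c_n=0$ for $n\geq 1$, while $[L_m,W_n]\cdot v=0$ gives the recursion $((a-1)(m+1)-n)d_{m+n}+bd_{m+n+1}=0$; together with eventual vanishing $d_n=0$ for $n\gg 0$, a short case analysis on $(a,b)$ shows $d_n=0$ for all $n\geq 1$, and that $d_0=\gamma$ is free precisely when $a=1,b=0$. The principal technical obstacle is this construction of the common eigenvector, since $M^N$ is typically infinite-dimensional and Lie's theorem cannot be applied to it directly; one must first isolate a finite-dimensional invariant subspace.

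For the conclusion, set $w:=L_{-1}v-\partial v$. Using $[\partial,L_{-1}]=0$ together with $[L_n,L_{-1}]=(n+1)L_{n-1}$ and the analogous $[W_n,L_{-1}]$ relation read off from Lemma \ref{l21}, a direct computation gives $L_nw=W_nw=0$ for $n\geq 1$ and $L_0w=\alpha w$, so $w$ again lies in the $\alpha$-eigenspace of $L_0$ inside the singular subspace. Iterating the operator $L_{-1}-\partial$ on $v$ produces a sequence $v_0=v,v_1=w,v_2,\dots$ all sitting in this common eigenspace; since $M$ is free of finite rank over $\mathbb{C}[\partial]$ (by the torsion-free lemma preceding Lemma \ref{ll1}), the $v_k$'s cannot remain $\mathbb{C}[\partial]$-independent, so some monic polynomial $P$ in $L_{-1}-\partial$ annihilates $v$. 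Factoring $P$ over $\mathbb{C}$ yields a vector $v'$ in the same common eigenspace satisfying $L_{-1}v'=(\partial+\beta)v'$ for some $\beta\in\mathbb{C}$. Replacing $v$ by $v'$, the subspace $\mathbb{C}[\partial]v$ is stable under both $L_\lambda$ and $W_\lambda$, hence is a nonzero $\mathcal{W}(a,b)$-submodule of $M$; irreducibility upgrades this to $M=\mathbb{C}[\partial]v$, and the absence of nonzero torsion makes $M$ free of rank one. Proposition \ref{pr3} then identifies $M\cong M_{\alpha,\beta}$ (respectively $M_{\alpha,\beta,\gamma}$), and the nontriviality hypothesis forces $\alpha\neq 0$ (respectively $\alpha\neq 0$ or $\gamma\neq 0$).
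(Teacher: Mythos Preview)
Your overall architecture matches the paper's: pass to the extended annihilation algebra, find a common eigenvector $v$ for $\mathcal{L}_{\mathcal{W}}(a,b)_0$ using solvability, and then argue that $\mathbb{C}[\partial]v$ (or a close variant) is already a submodule. However, there is a genuine gap in the case $a=1$, $b\neq 0$.

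Your ``short case analysis'' asserts that the eigenvalue recursion $((a-1)(m+1)-n)d_{m+n}+b\,d_{m+n+1}=0$ forces $d_0=0$ whenever $(a,b)\neq(1,0)$. This is false for $a=1$, $b\neq 0$: the recursion becomes $-n\,d_{m+n}+b\,d_{m+n+1}=0$, and taking $n=0$ only gives $d_k=0$ for $k\geq 1$, leaving $d_0=\gamma$ entirely unconstrained. So an eigenvector with $W_0v=\gamma v$, $\gamma\neq 0$, is not ruled out at this stage. Your subsequent iteration with $T:=L_{-1}-\partial$ then fails to close: from $[L_{-1},W_0]=bW_0$ one gets $[W_0,T]=-bW_0$, hence $W_0T^k v=(T-b)^kW_0v=\gamma(T-b)^k v$. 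In particular $W_0w=\gamma w-b\gamma v$, so $w$ is \emph{not} a $W_0$-eigenvector, and for the factored vector $v'=Q(T)v$ one obtains $W_0v'=\gamma\,Q(T-b)v$, which has no reason to lie in $\mathbb{C}[\partial]v'$. Thus $\mathbb{C}[\partial]v'$ need not be $W_\lambda$-stable and the submodule argument breaks.

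The paper handles precisely this case differently: it takes a minimal $\mathbb{C}[\partial]$-linear dependence among $L_{-1}^j v$, applies $W_0$ to it (using $W_0L_{-1}^j=(L_{-1}-b)^jW_0$), and compares coefficients of $L_{-1}^{m-1}v$ to force $b=0$ when $\gamma\neq 0$; only after $b=0$ is established does it invoke centrality of $\partial-L_{-1}$ and Schur's lemma. Your argument can be repaired along similar lines, or alternatively by observing that $W_0$ shifts $T$-eigenvalues by $b$, so choosing $v'$ with $T$-eigenvalue $\beta$ maximal along the $b$-chain forces $W_0v'=0$; but as written the proposal does not supply either fix. A minor secondary point: your appeal to ``locally finite action of $L_0$'' to obtain a finite-dimensional invariant subspace is not justified; the paper instead cites a lemma from \cite{CK} to the effect that $M^N$ itself is finite-dimensional.
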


\begin{proof} Assume that $V$ is a finite nontrivial irreducible $\mathcal{W}(a,b)$-module. Let $V_n=\{v\in V|\mathcal{L}_{\mathcal{W}}(a,b)_n\cdot v=0\}$. By Lemma \ref{ll2}, there exists some $n$ such that $V_n\neq \{0\}$. Let $N$ be the minimal integer such that $V_N\neq \{0\}$ and denote $U=V_N$. Since $V$ is nontrivial, we can deduce that $N\geq0$. By Lemma 3.1 in \cite{CK}, $U$ is finite dimensional.

 Since $\mathcal{L}_{\mathcal{W}}(a,b)_{N}$ is an ideal of $\mathcal{L}_{\mathcal{W}}(a,b)_{0}$, $U$ is a $\mathcal{L}_{\mathcal{W}}(a,b)_{0}$-module. Because of $\mathcal{L}_{\mathcal{W}}(a,b)_{N}\cdot U=0$, $U$ is exactly an $\mathcal{L}_{\mathcal{W}}(a,b)_{0}/\mathcal{L}_{\mathcal{W}}(a,b)_{N}$-module. By Lemma \ref{ll3}, $\mathcal{L}_{\mathcal{W}}(a,b)_{0}/\mathcal{L}_{\mathcal{W}}(a,b)_{N}$ is a finite-dimensional solvable Lie algebra. Because of \emph{Lie Theorem}, there exists a nonzero common eigenvector $v\in U$ under the action of $\mathcal{L}_{\mathcal{W}}(a,b)_{0}/\mathcal{L}_{\mathcal{W}}(a,b)_{N}$ and also the action of $\mathcal{L}_{\mathcal{W}}(a,b)_{0}$. Therefore, there exists a linear function $\chi$ on $\mathcal{L}_{\mathcal{W}}(a,b)_{0}$ such that $x\cdot v= \chi(x)v$ for all $x \in \mathcal{L}_{\mathcal{W}}(a,b)_{0}$.

Set $\mathcal{Y}=\text{span}_{\mathbb{C}}\{L_{-1},\partial\}$. Then $Lie(\mathcal{W}(a,b))^e$ has a decomposition of vector spaces
\begin{equation}
  Lie(\mathcal{W}(a,b))^e=\mathcal{Y}\oplus Lie(\mathcal{W}(a,b))_{0}.
\end{equation}
 By Poincare-Birkhoff-Witt(PBW) theorem, the universal enveloping algebra of $Lie(\mathcal{W}(a,b))^e$ is
 \begin{equation}
   U(Lie(\mathcal{W}(a,b))^e)=U(\mathcal{Y})\otimes U(Lie(\mathcal{W}(a,b))_{0}),
\end{equation}
 where $U(\mathcal{Y})=span_{\mathbb{C}}\{L_{-1}^{i}\partial^{j}| i,j\in \mathbb{Z^+}\}$, as a vector space over $\mathbb{C}$. Then we obtain
\begin{equation}\label{eqs1}
  V=U(Lie(\mathcal{W}(a,b))^e)\cdot v=U(\mathcal{Y})\cdot v=\sum_{i,j\in\mathbb{Z^+}}\mathbb{C}\partial^{i}L_{-1}^j\cdot v.
\end{equation}
Obviously, $L_{-1}\cdot v\neq0$. Otherwise, if $L_{-1}\cdot v=0$, then $V=\mathbb{C}[\partial]v$ is free of rank one, which appears to contradict the result in Proposition \ref{pr2}, i.e., $L_{-1}\cdot v=(\partial+\beta)v\neq0$ for some $\beta \in \mathbb{C}$.

Since $[\mathcal{L}_{\mathcal{W}}(a,b)_{0},\mathcal{L}_{\mathcal{W}}(a,b)_{0}]$ depends on the value of $a$ and $b$, we consider the following three cases.\\

\emph{Case 1.} $a\neq0, a\neq1, b \in \mathbb{C}$.

In this case, $[\mathcal{L}_{\mathcal{W}}(a,b)_{0},\mathcal{L}_{\mathcal{W}}(a,b)_{0}]=\mathbb{C}W_0+\mathcal{L}_{\mathcal{W}}(a,b)_{1}$ by Lemma \ref{l23}. Then $W_0 \cdot v=\chi(W_0)v=0$ and $\mathcal{L}_{\mathcal{W}}(a,b)_{1}\cdot v=\chi(\mathcal{L}_{\mathcal{W}}(a,b)_{1})v=0$. Therefore, $W_\lambda v=\sum_{i\in \mathbb{Z^+}}(W_{(j)}v)\frac{\lambda^j}{j!}
=\sum_{i\in \mathbb{Z^+}}(W_{j}v)\frac{\lambda^j}{j!}=0$ and
$\chi$ is determined by $\chi(L_0)$. We can suppose that $L_0\cdot v= \alpha v$ for some $\alpha \in \mathbb{C}$.

Let $R_\partial$ (resp. $L_\partial$) be the right (resp. left) multiplication by $\partial$ in the universal enveloping algebra of  $\mathcal{L}(a,b)$. Using $ R_\partial= L_\partial-ad_\partial$ and the binomial formula, we obtain
\begin{eqnarray}
  \mathcal{L}_{\mathcal{W}}(a,b)_{n}\partial^k &=&(R_\partial)^k \mathcal{L}_{\mathcal{W}}(a,b)_{n}=(L_\partial-ad_\partial)^k \mathcal{L}(a,b)_{n}\nonumber \\
\label{eqs2}  &=&\sum_{j=0}^k \partial^{k-j}(-ad_\partial)^j \mathcal{L}_{\mathcal{W}}(a,b)_{n}=\sum_{j=0}^k \partial^{k-j}\mathcal{L}_{\mathcal{W}}(a,b)_{n-j}
\end{eqnarray}
for $n,k \in \mathbb{Z^+}$. Since $W_\lambda v=0$ and $\mathbb{C}[\partial]W$ is an ideal of $\mathcal{W}(a,b)$, $W_\lambda V=0$ by (\ref{eqs1}) and (\ref{eqs2}). Thus, the irreducibility of $V$ as a $\mathcal{W}(a,b)$-module is equivalent to that of $V$ as a $\mathcal{W}(a,b)/\mathbb{C}[\partial]W\cong Vir$-module. Then the conclusion can be directly obtained by Proposition \ref{pr1}.


\emph{Case 2.} $a=1, b \in \mathbb{C}$.

In this case, $[\mathcal{L}_{\mathcal{W}}(a,b)_{0},\mathcal{L}_{\mathcal{W}}(a,b)_{0}]=\mathcal{L}_{\mathcal{W}}(a,b)_{1}$ by Lemma \ref{l23}. Then $L_0 \cdot v=\chi(L_0)v$, $W_0 \cdot v=\chi(W_0)v$ and $\mathcal{L}_{\mathcal{W}}(a,b)_{1}\cdot v=\chi(\mathcal{L}_{\mathcal{W}}(a,b)_{1})v=0$. We can suppose that $L_0\cdot v= \alpha v$ and $W_0\cdot v= \gamma v$ for some $\alpha,\gamma \in \mathbb{C}$.

Since $V$ is a free $\mathbb{C}[\partial]$-module of finite rank, there exists $m\in \mathbb{Z^+}\setminus \{0\}$ and $f_0(\partial),...,f_m(\partial)\in \mathbb{C}[\partial]$ such that
\begin{equation}\label{eqs3}
  f_m(\partial)\cdot (L_{-1}^m\cdot v)=\sum_{j=0}^{m-1}f_j(\partial)\cdot (L_{-1}^j\cdot v),
\end{equation}
where $\{L_{-1}^j\cdot v| j=0,1,2...,m-1\}$ is $\mathbb{C}[\partial]$-linearly independent and $f_m(\partial)\neq0$.
Let $W_0$ act on (\ref{eqs3}), noting that $W_0\cdot v=\gamma v$, $[\partial,W_0]=0$, and $[L_{-1},W_0]=bW_0$.
The left side becomes
\begin{align*}
  W_0\cdot (f_m(\partial)\cdot (L_{-1}^m\cdot v))& = f_m(\partial)\cdot (W_0\cdot(L_{-1}^m\cdot v))\\
  &=\gamma f_m(\partial)(L_{-1}-b)^m\cdot v,\\
  &=\gamma f_m(\partial)(L_{-1}^m\cdot v+\sum_{j=0}^{m-1}\binom{m}{j}L_{-1}^j(-b)^{m-j}\cdot v),\\
  &=\gamma(\sum_{j=0}^{m-1}f_j(\partial)\cdot (L_{-1}^j\cdot v)+f_m(\partial)\sum_{j=0}^{m-1}\binom{m}{j}L_{-1}^j(-b)^{m-j}\cdot v),
\end{align*}
while the right side becomes
\begin{align*}
  W_0\cdot(\sum_{j=0}^{m-1}f_j(\partial)\cdot (L_{-1}^j\cdot v))&=\sum_{j=0}^{m-1}f_j(\partial)\cdot (W_0\cdot (L_{-1}^j\cdot v))\\
  &=\gamma\sum_{j=0}^{m-1} f_j(\partial)(L_{-1}-b)^j\cdot v.
\end{align*}

Then we obtain that
\begin{equation}\label{eqs4}
  \gamma(\sum_{j=0}^{m-1}f_j(\partial)\cdot (L_{-1}^j\cdot v)+f_m(\partial)\sum_{j=0}^{m-1}\binom{m}{j}L_{-1}^j(-b)^{m-j}\cdot v)=\gamma\sum_{j=0}^{m-1} f_j(\partial)(L_{-1}-b)^j\cdot v.
\end{equation}

 If $\gamma=0$, then we can deduce that $W_\lambda v=0$, which is similar to Case 1. Thus, $V\cong M_{\alpha,\beta}$ with $\alpha\neq0$. Otherwise, by (\ref{eqs4}), we obtain that
 \begin{equation}\label{eqs5}
  \sum_{j=0}^{m-1}f_j(\partial)\cdot (L_{-1}^j\cdot v)+f_m(\partial)\sum_{j=0}^{m-1}\binom{m}{j}L_{-1}^j(-b)^{m-j}\cdot v=\sum_{j=0}^{m-1} f_j(\partial)(L_{-1}-b)^j\cdot v.
\end{equation}
 Comparing the coefficients of the $L_{-1}^{m-1}\cdot v$ term on both sides of (\ref{eqs5}), we can obtain that
 \begin{equation}
   f_{m-1}(\partial)-bmf_m(\partial)=f_{m-1}(\partial).
 \end{equation}
 Since $m\in \mathbb{Z^+}\setminus \{0\}$ and $f_m(\partial)\neq0$, then $b=0$. Obviously, $\partial-L_{-1}$ is the central element of $\mathcal{L}_{\mathcal{W}}(a,b)^e$. By Schur's Lemma, there exists some $\beta \in \mathbb{C}$ such that $(\partial-L_{-1})\cdot v= -\beta v$ for any $v \in V$. Thus $L_{-1}\cdot v=(\partial+\beta)v$ for any $v \in V$. By (\ref{eqs1}) and the irreducibility of $V$, then $V=\mathbb{C}[\partial]v$ which is free of rank one. Thus, $V\cong M_{\alpha,\beta,\gamma}$ with $\alpha\neq0$ or $\gamma\neq0$, which is irreducible by Propositions \ref{pr2} and \ref{pr3} (2).\\

\emph{Case 3.} $a=0, b \in \mathbb{C}$

In this case, $[\mathcal{L}_{\mathcal{W}}(a,b)_{0},\mathcal{L}_{\mathcal{W}}(a,b)_{0}]\subset\mathbb{C}W_0+\mathcal{L}_{\mathcal{W}}(a,b)_{1}$ by Lemma \ref{l23}. In more detail, we can obtain that $[L_0,L_n]=-nL_n, [L_0,W_m]=(-m-1)W_m+bW_{m+1}$ for all $n\geq1, m\geq0$. By Lemma \ref{ll2}, one can immediately obtain that $W_0 \cdot v=0$ and $\mathcal{L}_{\mathcal{W}}(a,b)_{1}\cdot v=0$. Then with the same discussion as Case 1, we can conclude that $V\cong M_{\alpha,\beta}$ with $\alpha\neq0$.

This completes the proof.

\end{proof}

\section{Classification of finite irreducible modules over $TSV(a,b)$ and $TSV(c)$}

In this section, we apply the methods and results in Section 3 to Lie conformal algebras $TSV(a,b)$ and $TSV(c)$ and give the classification of all  finite nontrivial irreducible conformal modules over them.

\begin{definition} (Ref. \cite{Hong})
The Lie conformal algebra  $TSV(a,b)$ with two parameters $a,b \in\mathbb{C}$ is a free $\mathbb{C}[\partial]$-module generated by $L$, $Y$ and $M$ and satisfies

 \begin{equation}
  \xymatrix{[L_\lambda L]=(\partial+2\lambda)L,\qquad [L_\lambda Y]=(\partial+a\lambda+b)Y,}
 \end{equation}
  \begin{equation}
   \xymatrix{[L_\lambda M]=(\partial+2(a-1)\lambda+2b)M,\qquad [Y_\lambda Y]=(\partial+2\lambda)M,}
 \end{equation}
  \begin{equation}
    \xymatrix{[Y_\lambda M]=[M_\lambda M]=0.}
 \end{equation}

 The Lie conformal algebra  $TSV(c)$ with a parameter $c \in\mathbb{C}$ is a free $\mathbb{C}[\partial]$-module generated by $L$, $Y$ and $M$ and satisfies
 \begin{equation}
  \xymatrix{[L_\lambda L]=(\partial+2\lambda)L,\qquad [L_\lambda Y]=(\partial+\frac{3}{2}\lambda+c)Y,}
 \end{equation}
  \begin{equation}
   \xymatrix{[L_\lambda M]=(\partial+2c)M,\qquad [Y_\lambda Y]=(\partial+2\lambda)(-\partial-2c)M,}
 \end{equation}
  \begin{equation}
    \xymatrix{[Y_\lambda M]=[M_\lambda M]=0.}
 \end{equation}
 \end{definition}
Note that $\mathbb{C}[\partial]M$ is an abelian ideal of both Lie conformal algebra $TSV(a,b)$ and $TSV(c)$. Obviously, we have $TSV(a,b)/\mathbb{C}[\partial]M\cong \mathcal{W}(a,b)$ and $TSV(c)/\mathbb{C}[\partial]M\cong \mathcal{W}(\frac{3}{2},c)$.


 Next we study the extended annihilation algebras of $TSV(a,b)$ and $TSV(c)$.
\begin{lemma}\label{lf1}
(1) The annihilation algebra of $Lie(TSV(a,b))$ is $Lie(TSV(a,b))^+= \sum \limits_{\mathclap {m\geq-1}}\mathbb{C}L_m\oplus\sum\limits_{\mathclap{p\geq-\frac{1}{2}}}\mathbb{C}Y_p\oplus\sum\limits_{\mathclap{n\geq 0}}\mathbb{C}M_n$ with the following Lie bracket:
\begin{align}
  &[L_m,L_n]=(m-n)L_{m+n},\quad[L_m,M_n]=[(2a-3)(m+1)-n]M_{m+n}+2bM_{m+n+1},\nonumber\\
  \label{eqf1}&[L_m,Y_p]=[(a-1)(m+1)-(p+\frac{1}{2})]Y_{m+p}+bY_{m+p+1},\quad[Y_p,Y_q]=(p-q)M_{p+q},\\
  &[Y_p,M_n]=[M_m,M_n]=0.\nonumber
\end{align}
And the extended annihilation algebra $Lie(TSV(a,b))^e=Lie(TSV(a,b))^+\rtimes \mathbb{C}\partial$, satisfying (\ref{eqf1}) and
\begin{equation}
  [\partial,L_m]=-(m+1)L_{m-1},\quad[\partial,M_n]=-nM_{n-1},\quad[\partial,Y_p]=-(p+\frac{1}{2})Y_{p-1}.\quad\\
\end{equation}
(2) The annihilation algebra of $Lie(TSV(c))$ is $Lie(TSV(c))^+= \sum \limits_{\mathclap {m\geq-1}}\mathbb{C}L_m\oplus\sum\limits_{\mathclap{p\geq-\frac{1}{2}}}\mathbb{C}Y_p\oplus\sum\limits_{\mathclap{n\geq 0}}\mathbb{C}M_n$ with the following Lie bracket:
\begin{align}
  &[L_m,L_n]=(m-n)L_{m+n},\quad[L_m,M_n]=[-(m+1)-n]M_{m+n}+2cM_{m+n+1},\nonumber\\
  \label{eqss2}&[L_m,Y_p]=(\frac{1}{2}m-p)Y_{m+p}+cY_{m+p+1},\  [Y_p,Y_q]=(p-q)(p+q)M_{p+q-1}+2c(q-p)M_{p+q},\\
  &[Y_p,M_n]=[M_m,M_n]=0.\nonumber
\end{align}
And the extended annihilation algebra $Lie(TSV(c))^e=Lie(TSV(c))^+\rtimes \mathbb{C}\partial$, satisfying (\ref{eqss2}) and
\begin{equation}
  [\partial,L_m]=-(m+1)L_{m-1},\quad[\partial,M_n]=-nM_{n-1},\quad[\partial,Y_p]=-(p+\frac{1}{2})Y_{p-1}.\\
\end{equation}

\end{lemma}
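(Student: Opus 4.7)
The proof will proceed exactly in parallel with Lemma \ref{l21}, which handles the analogous computation for $\mathcal{W}(a,b)$. For each pair of generators $a,b\in\{L,Y,M\}$, I read off the $j$-th products $a_{(j)}b$ from the defining $\lambda$-brackets by expanding in powers of $\lambda$ via (1.4), then plug these into formula (\ref{106}) to get the brackets $[a_{(m)},b_{(n)}]$ in the coefficient algebra. Finally I apply an index shift so that the resulting formulas take the stated uniform form: $L_m:=L_{(m+1)}$ ($m\geq-1$), $M_n:=M_{(n)}$ ($n\geq 0$), and $Y_p:=Y_{(p+\frac{1}{2})}$ ($p\geq-\frac{1}{2}$, so $p$ is a half-integer). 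The half-integer shift for $Y$ is forced by the requirement $[Y_p,Y_q]\in\sum\mathbb{C}M_{p+q+\ast}$ with integer indices.

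Concretely, for $TSV(a,b)$ the nonzero $j$-th products are $L_{(0)}L=\partial L$, $L_{(1)}L=2L$, $L_{(0)}Y=(\partial+b)Y$, $L_{(1)}Y=aY$, $L_{(0)}M=(\partial+2b)M$, $L_{(1)}M=2(a-1)M$, $Y_{(0)}Y=\partial M$, and $Y_{(1)}Y=2M$. Substituting into (\ref{106}) and simplifying with $(\partial a)_{(n)}=-na_{(n-1)}$ yields the first block of brackets in (\ref{eqf1}); for example the $Y$--$Y$ bracket becomes
\begin{equation*}
[Y_{(p+\frac{1}{2})},Y_{(q+\frac{1}{2})}]=-(p+q+1)M_{(p+q)}+(2p+1)M_{(p+q)}=(p-q)M_{p+q}.
\end{equation*}
For $TSV(c)$ the only substantial new ingredient is the expansion $[Y_\lambda Y]=(-\partial^2-2c\partial)M+(-2\partial-4c)\lambda M$, giving $Y_{(0)}Y=-\partial^2 M-2c\partial M$ and $Y_{(1)}Y=-2\partial M-4cM$. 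Applying $(\partial^{k}M)_{(n)}=(-1)^{k}n(n-1)\cdots(n-k+1)M_{(n-k)}$ inside (\ref{106}) and collecting terms produces
\begin{equation*}
[Y_{(p+\frac{1}{2})},Y_{(q+\frac{1}{2})}]=(p-q)(p+q)M_{p+q-1}+2c(q-p)M_{p+q},
\end{equation*}
which is the formula in (\ref{eqss2}); the other brackets are obtained in the same mechanical way. In both cases the vanishing brackets $[Y_p,M_n]=[M_m,M_n]=0$ are immediate from $[Y_\lambda M]=[M_\lambda M]=0$.

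For the extended part, one extends scalars by $\mathbb{C}\partial$ with the action $\partial\cdot a_{(n)}=-na_{(n-1)}$ inherited from the defining relation $(\partial a)_{(n)}+na_{(n-1)}=0$ in $Lie(\mathcal{R})$. Translating this through the reindexing gives $[\partial,L_m]=-(m+1)L_{m-1}$, $[\partial,M_n]=-nM_{n-1}$, and $[\partial,Y_p]=-(p+\tfrac12)Y_{p-1}$, with the convention that $L_{-2}=M_{-1}=Y_{-3/2}=0$ so that the negativity of the index is not a problem.

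The only step that requires some care — and is really the sole obstacle — is the $TSV(c)$ computation of $[Y_p,Y_q]$, because the presence of $(-\partial-2c)$ introduces a $\partial^2$-term that interacts with the relation $(\partial a)_{(n)}=-na_{(n-1)}$ and produces the quadratic factor $(p+q)$ and the shift in $M$-index by one. Once this bracket is verified, the remaining identities in (\ref{eqss2}) and (\ref{eqf1}) fall out by identical bookkeeping, and the lemma is proved.
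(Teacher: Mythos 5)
Your proposal is correct and follows exactly the route the paper intends: the paper's proof of this lemma simply defers to the computation in Lemma \ref{l21}, and you have carried out that same $j$-th-product/reindexing computation explicitly (including the only delicate point, the $\partial^2$ term in $[Y_\lambda Y]$ for $TSV(c)$), with all the resulting brackets checking out.
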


\begin{proof}
Through a discussion similar to Lemma \ref{l21}, we can get the above results right away.
\end{proof}

Denote $\mathcal{L}(a,b)=Lie(TSV(a,b))^e$. Define $\mathcal{L}(a,b)_n=\sum \limits_{\mathclap {i\geq n}}(\mathbb{C}L_i\oplus\mathbb{C}M_i\oplus\mathbb{C}Y_{i+\frac{1}{2}})$. Then we obtain a filtration of subalgebras of $\mathcal{L}(a,b)$:
\begin{equation*}
  \mathcal{L}(a,b)\supset \mathcal{L}(a,b)_{-1}\supset \mathcal{L}(a,b)_0\supset\cdot\cdot\cdot\supset\cdot\cdot\cdot\supset \mathcal{L}(a,b)_{n}\supset\cdot\cdot\cdot,
\end{equation*}
where we set $M_{-1}=0$. Note that $\mathcal{L}(a,b)_{-1}= Lie(TSV(a,b))^{+}$.

Then we can obtain some simple results about the above filtration of subalgebras of $\mathcal{L}(a,b)$.
\begin{lemma}\label{lf3}
(1) For $m,n\in\mathbb{Z^+},[\mathcal{L}(a,b)_{m},\mathcal{L}(a,b)_{n}]\subset\mathcal{L}(a,b)_{m+n}$. And $\mathcal{L}(a,b)_{n}$ is an ideal of $\mathcal{L}(a,b)_{0}$ for all $n\in\mathbb{Z^+}$.\\
(2) $[\partial,\mathcal{L}(a,b)_{n}]=\mathcal{L}(a,b)_{n-1}$ for all $n\in\mathbb{Z^+} $.
\end{lemma}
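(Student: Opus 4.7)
The plan is to verify both claims by direct computation from the bracket tables in Lemma \ref{lf1}, in exactly the same spirit that Lemma \ref{l22} was handled for $\mathcal{W}(a,b)$. The statement is essentially a bookkeeping exercise about how the spanning index interacts with each of the structural formulas.

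For part (1), I would work from the spanning description $\mathcal{L}(a,b)_m = \sum_{i\geq m}(\mathbb{C}L_i \oplus \mathbb{C}M_i \oplus \mathbb{C}Y_{i+1/2})$ and examine each of the six bracket types between generators. The brackets $[L_i, L_j]$, $[L_i, M_j]$, and $[L_i, Y_{j+1/2}]$ each produce linear combinations of generators whose leading index is at least $i+j$ (with the $b$-correction term raising the index even further to $i+j+1$), while $[Y_{i+1/2}, Y_{j+1/2}] = (i-j)M_{i+j+1}$ in fact sits in $\mathcal{L}(a,b)_{i+j+1} \subset \mathcal{L}(a,b)_{i+j}$. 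The remaining brackets $[Y_p, M_n]$ and $[M_m, M_n]$ vanish. Summing over $i \geq m$ and $j \geq n$ gives $[\mathcal{L}(a,b)_m, \mathcal{L}(a,b)_n] \subset \mathcal{L}(a,b)_{m+n}$, and specializing to $m=0$ immediately yields that $\mathcal{L}(a,b)_n$ is an ideal of $\mathcal{L}(a,b)_0$.

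For part (2), the inclusion $[\partial, \mathcal{L}(a,b)_n] \subset \mathcal{L}(a,b)_{n-1}$ will follow termwise from the three formulas $[\partial, L_m] = -(m+1)L_{m-1}$, $[\partial, M_n] = -nM_{n-1}$, and $[\partial, Y_p] = -(p+1/2)Y_{p-1}$. For the reverse inclusion I plan to invert these relations, producing an explicit preimage in $\mathcal{L}(a,b)_n$ for each generator of $\mathcal{L}(a,b)_{n-1}$: for every $k \geq n-1$ I would write
\[
L_k = -\tfrac{1}{k+2}[\partial, L_{k+1}], \qquad Y_{k+1/2} = -\tfrac{1}{k+2}[\partial, Y_{k+3/2}], \qquad M_k = -\tfrac{1}{k+1}[\partial, M_{k+1}],
\]
where the last formula is only needed for $k \geq 0$ because $M_{-1} = 0$ by convention. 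The shifted generators $L_{k+1}, Y_{k+3/2}, M_{k+1}$ all lie in $\mathcal{L}(a,b)_n$ (since $k+1 \geq n$), and the denominators $k+1, k+2$ are strictly positive in the range $k \geq n-1 \geq -1$, so each inversion is legitimate.

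There is no real obstacle here, since the statement is purely combinatorial. The only genuine points of care are the half-integer convention on $Y$ (so $Y_{i+1/2}$ must be assigned ``filtration level'' $i$, not $i+1/2$) and the small-index boundary $M_{-1} = 0$; once these are tracked, the computation is mechanical. The identical argument applies verbatim to $TSV(c)$ using (\ref{eqss2}) in place of (\ref{eqf1}), because the bracket $[Y_p, Y_q]$ there still lands in the $M$-sector one level deeper, and the $\partial$-action on generators has the same shape.
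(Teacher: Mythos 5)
Your proof is correct and is exactly the direct verification against the bracket table of Lemma \ref{lf1} that the paper has in mind; the paper in fact states Lemma \ref{lf3} without proof (as it does for the analogous Lemma \ref{l22}), treating it as an immediate consequence of the relations. Your handling of the two delicate points — assigning $Y_{i+1/2}$ to filtration level $i$ (so that $[Y_p,Y_q]$ lands one level deeper in the $M$-sector) and the convention $M_{-1}=0$ together with the nonvanishing of the coefficients $k+1$, $k+2$ when inverting the $\partial$-action — is precisely what makes the equality in part (2) hold.
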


\begin{lemma}\label{lf2}
(1) If $a\neq0, a\neq2, a\neq\frac{3}{2}, b \in \mathbb{C}, [\mathcal{L}(a,b)_{0},\mathcal{L}(a,b)_{0}]=\mathbb{C}M_0+\mathbb{C}Y_{\frac{1}{2}}+\mathcal{L}(a,b)_{1}$.\\
(2) If $a=\frac{3}{2}, b \in \mathbb{C}, [\mathcal{L}(a,b)_{0},\mathcal{L}(a,b)_{0}]=\mathbb{C}Y_{\frac{1}{2}}+\mathcal{L}(a,b)_{1}$.\\
(3) If $a=2, b \in \mathbb{C}, [\mathcal{L}(a,b)_{0},\mathcal{L}(a,b)_{0}]=\mathbb{C}M_0+\mathcal{L}(a,b)_{1}$.\\
(4) If $a=0,b=0, [\mathcal{L}(a,b)_{0},\mathcal{L}(a,b)_{0}]=\mathbb{C}M_0+\mathbb{C}Y_{\frac{1}{2}}+\mathcal{L}(a,b)_{1}$.\\
(5) If $a=0,b \neq0, [\mathcal{L}(a,b)_{0},\mathcal{L}(a,b)_{0}]\subset\mathbb{C}M_0+\mathbb{C}Y_{\frac{1}{2}}+\mathcal{L}(a,b)_{1}$.
\end{lemma}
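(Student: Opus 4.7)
My plan is to bound $[\mathcal{L}(a,b)_0, \mathcal{L}(a,b)_0]$ from above and from below using the bracket relations in (\ref{eqf1}) together with the ideal property from Lemma \ref{lf3}, matching each bound to the appropriate case.

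\emph{Upper bound (holds in every case).} Since $\mathcal{L}(a,b)_1$ is an ideal of $\mathcal{L}(a,b)_0$, the derived subalgebra is determined modulo $\mathcal{L}(a,b)_1$ by the brackets of representatives of $\mathcal{L}(a,b)_0/\mathcal{L}(a,b)_1 \cong \mathbb{C}L_0 \oplus \mathbb{C}Y_{1/2} \oplus \mathbb{C}M_0$. Using (\ref{eqf1}) one finds
\[
[L_0, Y_{1/2}] \equiv (a-2)Y_{1/2}, \qquad [L_0, M_0] \equiv (2a-3)M_0 \pmod{\mathcal{L}(a,b)_1},
\]
while all other pairwise brackets among $\{L_0, Y_{1/2}, M_0\}$ vanish. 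Hence $[\mathcal{L}(a,b)_0, \mathcal{L}(a,b)_0] \subseteq \mathbb{C}Y_{1/2} + \mathbb{C}M_0 + \mathcal{L}(a,b)_1$ for every $(a,b)$, which proves (5) outright and gives the ``$\subseteq$'' direction of (1)--(4).

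\emph{Lower bound (case analysis).} The same congruences show that $Y_{1/2}$ (resp.\ $M_0$) lies in $[\mathcal{L}(a,b)_0, \mathcal{L}(a,b)_0] + \mathcal{L}(a,b)_1$ exactly when $a \neq 2$ (resp.\ $a \neq 3/2$), which distinguishes precisely the cases (1)--(4). For those four cases I also need the reverse inclusion $\mathcal{L}(a,b)_1 \subseteq [\mathcal{L}(a,b)_0, \mathcal{L}(a,b)_0]$. The elements $L_n$ with $n \geq 1$ come immediately from $[L_0, L_n] = -nL_n$, and $M_n$ with $n \geq 2$ from $[Y_{1/2}, Y_{n-1/2}] = (1-n)M_n$. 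The single element $M_1$ is extracted from $[L_1, M_0] = (4a-6)M_1 + 2bM_2$ when $a \neq 3/2$ (using that $M_2$ is already in the derived subalgebra), and from $[L_0, M_1] = -M_1 + 2bM_2$ when $a = 3/2$.

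The most delicate step is recovering each $Y_p$ for $p \geq 3/2$. When $a \neq 0$ (covering cases (1), (2), (3)), I would invoke the identity
\[
[L_0, Y_p] - [L_1, Y_{p-1}] = -a\, Y_p,
\]
obtained by a direct expansion of (\ref{eqf1}), which exhibits $Y_p$ as a bracket. When $a = b = 0$ (case (4)), the simpler identity $[L_0, Y_p] = -(p+3/2)Y_p$ suffices. The obstruction that explains why (5) is stated only as an inclusion is exactly that for $a = 0$, $b \neq 0$ every bracket $[L_m, Y_p]$ has the shape $-(m+p+3/2)Y_{m+p} + bY_{m+p+1}$, so no finite linear combination of such brackets can isolate a single $Y_q$; consequently the reverse inclusion genuinely fails in that case, but is not asserted.
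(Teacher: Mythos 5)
Your proof is correct, and it follows the same route the paper intends: Lemma \ref{lf2} is proved there simply by appealing to the bracket relations of Lemma \ref{lf1}, and your argument is a careful write-up of exactly that computation (upper bound modulo $\mathcal{L}(a,b)_1$, then recovering $\mathcal{L}(a,b)_1$, $Y_{\frac{1}{2}}$ and $M_0$ case by case). The identities you use, e.g. $[L_0,Y_p]-[L_1,Y_{p-1}]=-aY_p$ and $[L_1,M_0]=(4a-6)M_1+2bM_2$, check out against (\ref{eqf1}), and your explanation of why case (5) is only an inclusion is consistent with the statement.
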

\begin{proof}
By the relations of $\mathcal{L}(a,b)$ in Lemma \ref{lf1}, we can obtain the results immediately.
\end{proof}

\begin{lemma}\label{lf4}
For a fixed $N \in\mathbb{Z^+}, \mathcal{L}(a,b)_0/\mathcal{L}(a,b)_N$ is a finite-dimensional solvable Lie algebra.
\end{lemma}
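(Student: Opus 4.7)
The plan is to mirror the proof of Lemma \ref{ll3} almost verbatim, adapting the bracket computations to accommodate the extra generator $Y$. First I would dispose of finite-dimensionality: since $\mathcal{L}(a,b)_0/\mathcal{L}(a,b)_N$ is spanned by the images of the finitely many elements $L_i, M_i, Y_{i+\frac{1}{2}}$ with $0\le i <N$, it is clearly finite-dimensional, so the only real issue is solvability.

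For solvability, I would set $\mathcal{L}(a,b)_0^{(0)}=\mathcal{L}(a,b)_0$ and $\mathcal{L}(a,b)_0^{(k+1)}=[\mathcal{L}(a,b)_0^{(k)},\mathcal{L}(a,b)_0^{(k)}]$, then show by induction that $\mathcal{L}(a,b)_0^{(n+1)}\subset\mathcal{L}(a,b)_{n}$ for all $n\ge 1$. Lemma \ref{lf2} gives the uniform inclusion $\mathcal{L}(a,b)_0^{(1)}\subset\mathbb{C}M_0+\mathbb{C}Y_{\frac12}+\mathcal{L}(a,b)_1$ (in every one of the five cases). To pass from the first derived algebra to the second, I would use the bracket relations from Lemma \ref{lf1}: $[M_0,M_0]=0$, $[Y_{\frac12},M_0]=0$, and $[Y_{\frac12},Y_{\frac12}]=(\frac12-\frac12)M_1=0$, together with $[M_0,\mathcal{L}(a,b)_1]\subset\mathcal{L}(a,b)_1$ and $[Y_{\frac12},\mathcal{L}(a,b)_1]\subset\mathcal{L}(a,b)_1$ (the latter because $[Y_{\frac12},Y_{p}]\in\mathbb{C}M_{p+\frac12}\subset\mathcal{L}(a,b)_1$ for $p\ge\frac32$, and similarly for brackets with $L_m,M_n$ with $m,n\ge 1$). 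These observations collapse the degree-$0$ part and yield $\mathcal{L}(a,b)_0^{(2)}\subset\mathcal{L}(a,b)_1$, which is the base of the induction.

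The inductive step is then immediate from Lemma \ref{lf3}(1): if $\mathcal{L}(a,b)_0^{(n+1)}\subset\mathcal{L}(a,b)_n$, then
\[
\mathcal{L}(a,b)_0^{(n+2)}=[\mathcal{L}(a,b)_0^{(n+1)},\mathcal{L}(a,b)_0^{(n+1)}]\subset[\mathcal{L}(a,b)_n,\mathcal{L}(a,b)_n]\subset\mathcal{L}(a,b)_{2n}\subset\mathcal{L}(a,b)_{n+1},
\]
since $2n\ge n+1$ for $n\ge 1$. Consequently $\mathcal{L}(a,b)_0^{(N+1)}\subset\mathcal{L}(a,b)_N$, which is zero in the quotient, so $\mathcal{L}(a,b)_0/\mathcal{L}(a,b)_N$ is solvable.

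The only step that requires any real care, and which I would single out as the main (minor) obstacle, is verifying that the ``boundary'' generators $M_0$ and $Y_{\frac12}$ that appear in $\mathcal{L}(a,b)_0^{(1)}$ but not in $\mathcal{L}(a,b)_1$ cannot reproduce themselves after one more commutator — this is what the vanishing of $[Y_{\frac12},Y_{\frac12}]$, $[Y_{\frac12},M_0]$, $[M_0,M_0]$ guarantees, and it is precisely the feature that makes the inclusion drop from $\mathbb{C}M_0+\mathbb{C}Y_{\frac12}+\mathcal{L}(a,b)_1$ down into $\mathcal{L}(a,b)_1$ in a single step. Once that is checked, the rest of the argument is a routine filtration induction identical in spirit to the proof of Lemma \ref{ll3}.
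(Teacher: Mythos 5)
Your proof is correct and follows essentially the same route as the paper: a filtration induction on the derived series, using Lemma \ref{lf2} for the first derived algebra and Lemma \ref{lf3}(1) for the inductive step. The only (harmless) difference is that you verify the sharper inclusion $[Y_{\frac12},\mathcal{L}(a,b)_1]\subset\mathcal{L}(a,b)_1$ and so reach $\mathcal{L}(a,b)_0^{(2)}\subset\mathcal{L}(a,b)_1$ in one step, whereas the paper only uses $[Y_{\frac12},\mathcal{L}(a,b)_1]\subset\mathbb{C}M_0+\mathcal{L}(a,b)_1$ and needs one further commutator to eliminate $M_0$, obtaining $\mathcal{L}(a,b)_0^{(3)}\subset\mathcal{L}(a,b)_1$ and hence $\mathcal{L}(a,b)_0^{(n+2)}\subset\mathcal{L}(a,b)_n$.
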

\begin{proof}
Obviously, $\mathcal{L}(a,b)_0/\mathcal{L}(a,b)_N$ is a finite-dimensional Lie algebra. So we only need to prove that it is solvable. Denote the derived subalgebras of $\mathcal{L}(a,b)_0$ by $\mathcal{L}(a,b)_0^{(0)}=\mathcal{L}(a,b)_0$ and $\mathcal{L}(a,b)_0^{(n+1)}=[\mathcal{L}(a,b)_0^{(n)},\mathcal{L}(a,b)_0^{(n)}]$ for $n \in \mathbb{Z^{+}}$.\\
 No matter which case of Lemma \ref{lf2}, we can get  $\mathcal{L}(a,b)_0^{(1)}=[\mathcal{L}(a,b)_0$,$\mathcal{L}(a,b)_0]\subset\mathbb{C}M_0+\mathbb{C}Y_{\frac{1}{2}}+\mathcal{L}(a,b)_{1}$. Since $[M_0,\mathcal{L}(a,b)_1]\subset\mathcal{L}(a,b)_1$,
  $[Y_{\frac{1}{2}},\mathcal{L}(a,b)_1]\subset\mathbb{C}M_0+\mathcal{L}(a,b)_1$, $[\mathbb{C}M_0,Y_{\frac{1}{2}}]=0$, we obtain $\mathcal{L}(a,b)_0^{(2)}\subset\mathbb{C}M_0+\mathcal{L}(a,b)_1$. Thus $\mathcal{L}(a,b)_0^{(3)}\subset\mathcal{L}(a,b)_1$. By Lemma \ref{lf3} and calculating this derived subalgebras, we get $\mathcal{L}(a,b)_0^{(n+2)}\subset\mathcal{L}(a,b)_n$ for all $n\in \mathbb{Z^+}$. Then we obtain this conclusion.
\end{proof}

Denote $\mathcal{L}(c)=Lie(TSV(c))^e$. Define $\mathcal{L}(c)_n=\sum \limits_{\mathclap {i\geq n}}(\mathbb{C}L_i\oplus\mathbb{C}M_i\oplus\mathbb{C}Y_{i+\frac{1}{2}})$. Then we obtain a filtration of subalgebras of $\mathcal{L}(c)$:
\begin{equation*}
  \mathcal{L}(c)\supset \mathcal{L}(c)_{-1}\supset \mathcal{L}(c)_0\supset\cdot\cdot\cdot\supset\cdot\cdot\cdot\supset \mathcal{L}(c)_{n}\supset\cdot\cdot\cdot,
\end{equation*}
where we set $M_{-1}=0$. Note that $\mathcal{L}(c)_{-1}= Lie(TSV(c))^{+}$.

\begin{lemma}
(1) For $m,n\in\mathbb{Z^+},[\mathcal{L}(c)_{m},\mathcal{L}(c)_{n}]\subset\mathcal{L}(c)_{m+n}$. And $\mathcal{L}(c)_{n}$ is an ideal of $\mathcal{L}(c)_{0}$ for all $n\in\mathbb{Z^+}$.\\
(2) $[\partial,\mathcal{L}(c)_{n}]=\mathcal{L}(c)_{n-1}$ for all $n\in\mathbb{Z^+} $.
\end{lemma}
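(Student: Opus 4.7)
The plan is to imitate the proof of Lemma \ref{lf3}, which is the exactly analogous statement for $\mathcal{L}(a,b)$. Both parts are purely mechanical verifications using the explicit bracket formulas for $Lie(TSV(c))^e$ recorded in Lemma \ref{lf1}(2), so I expect no real obstacle, only a careful accounting of indices (in particular for the fermionic-type generators $Y_{i+1/2}$).

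For part (1), I would check the containment on generators. A typical generator in $\mathcal{L}(c)_m$ is one of $L_i$, $M_i$, or $Y_{i+1/2}$ with $i\geq m$, and similarly for $\mathcal{L}(c)_n$. Using the formulas in (\ref{eqss2}), every bracket of two such generators is a $\mathbb{C}$-linear combination of $L_k$, $M_k$, or $Y_{k+1/2}$ with $k\geq m+n$. For example, $[L_i, Y_{j+1/2}]=(\tfrac{i}{2}-j-\tfrac{1}{2})Y_{i+j+1/2}+cY_{i+j+3/2}$ has $Y$-indices $i+j+1/2$ and $i+j+3/2$, which lie in $\mathcal{L}(c)_{i+j}\subset\mathcal{L}(c)_{m+n}$. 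The most delicate bracket is $[Y_p,Y_q]=(p-q)(p+q)M_{p+q-1}+2c(q-p)M_{p+q}$: with $p\geq m+\tfrac{1}{2}$ and $q\geq n+\tfrac{1}{2}$ we get $p+q-1\geq m+n$, so $M_{p+q-1}\in\mathcal{L}(c)_{m+n}$ (and $M_{-1}=0$ by convention covers the edge case). The remaining brackets $[L_m,L_n]$, $[L_m,M_n]$, $[Y_p,M_n]$, $[M_m,M_n]$ are either immediate or zero. The ideal statement $\mathcal{L}(c)_n\trianglelefteq\mathcal{L}(c)_0$ then follows by specialization $m=0$.

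For part (2), I would again work on generators. Since $[\partial,L_i]=-(i+1)L_{i-1}$, $[\partial,M_i]=-iM_{i-1}$, and $[\partial,Y_{i+1/2}]=-(i+1)Y_{i-1/2}$, the bracket $[\partial,\cdot]$ shifts each index down by one, giving $[\partial,\mathcal{L}(c)_n]\subset\mathcal{L}(c)_{n-1}$. Conversely, any generator $X_k$ of $\mathcal{L}(c)_{n-1}$ (so $k\geq n-1$) can be written as $X_k=[\partial,Y]$ for some $Y\in\mathcal{L}(c)_n$ by solving for $Y$ directly: the relevant scalar coefficients $-(i+1)$, $-i$, $-(i+1)$ are nonzero whenever $i\geq n\geq 0$ and $i\neq 0$ in the middle case, while the only degeneracy $[\partial,M_0]=0$ corresponds to $M_{-1}=0$ and therefore causes no problem. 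Combining both inclusions yields the equality.

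In summary, the lemma follows by copying verbatim the argument of Lemma \ref{lf3} with the only substitution being the bracket table (\ref{eqf1}) replaced by (\ref{eqss2}); the half-integer indices on the $Y$'s shift by exactly the right amount to keep the filtration statement intact.
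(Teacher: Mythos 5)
Your verification is correct and is exactly the routine index-check the paper intends: the paper states this lemma without proof, treating it (like its analogues, Lemmas \ref{l22} and \ref{lf3}) as an immediate consequence of the bracket formulas in Lemma \ref{lf1}(2). All of your index computations, including the half-integer shifts for the $Y_p$ and the degenerate case $[\partial,M_0]=0$ matching the convention $M_{-1}=0$, check out.
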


\begin{lemma}
(1) If $c=0, [\mathcal{L}(c)_{0},\mathcal{L}(c)_{0}]=\mathbb{C}M_0+\mathbb{C}Y_{\frac{1}{2}}+\mathcal{L}(c)_{1}$.\\
(2) If $c\neq0, [\mathcal{L}(c)_{0},\mathcal{L}(c)_{0}]\subset\mathbb{C}M_0+\mathbb{C}Y_{\frac{1}{2}}+\mathcal{L}(c)_{1}$.
\end{lemma}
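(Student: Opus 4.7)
The plan is to mirror the proof of Lemma \ref{lf2}, computing $[\mathcal{L}(c)_0,\mathcal{L}(c)_0]$ modulo $\mathcal{L}(c)_1$ directly from the bracket table in Lemma \ref{lf1}(2). The vector space $\mathcal{L}(c)_0$ is spanned by $\{L_m,M_n,Y_{p}\ :\ m,n\ge 0,\ p\ge\tfrac12\}$, while $\mathcal{L}(c)_1$ is spanned by the same family with all indices shifted up by one, so only brackets pairing two elements of $\{L_0,M_0,Y_{1/2}\}$ can possibly contribute nontrivially modulo $\mathcal{L}(c)_1$.

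Reading off the surviving brackets I obtain $[L_0,L_0]=[M_0,M_0]=[M_0,Y_{1/2}]=0$, and $[Y_{1/2},Y_{1/2}]=0$ (both coefficients $(p-q)(p+q)$ and $2c(q-p)$ vanish at $p=q=\tfrac12$), while
\begin{equation*}
[L_0,M_0]=-M_0+2cM_1\equiv -M_0,\qquad [L_0,Y_{1/2}]=-\tfrac12 Y_{1/2}+cY_{3/2}\equiv -\tfrac12 Y_{1/2}\pmod{\mathcal{L}(c)_1}.
\end{equation*}
Every other bracket of generators of $\mathcal{L}(c)_0$ lies in $\mathcal{L}(c)_1$; in particular $[Y_p,Y_q]=(p-q)(p+q)M_{p+q-1}+2c(q-p)M_{p+q}\in\mathcal{L}(c)_1$ whenever $\{p,q\}\ne\{\tfrac12,\tfrac12\}$, since then $p+q\ge 2$. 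This establishes the containment $[\mathcal{L}(c)_0,\mathcal{L}(c)_0]\subset\mathbb{C}M_0+\mathbb{C}Y_{1/2}+\mathcal{L}(c)_1$, which is exactly (2) and one half of (1).

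For the reverse inclusion in (1), setting $c=0$ collapses the bracket formulas to the diagonal identities $[L_0,L_n]=-nL_n$, $[L_0,M_n]=-(n+1)M_n$, $[L_0,Y_p]=-pY_p$, each having nonzero eigenvalue at the relevant indices ($n\ge 1$ for $L_n$, $n\ge 0$ for $M_n$, $p\ge\tfrac12$ for $Y_p$). Hence every generator of $\mathbb{C}M_0+\mathbb{C}Y_{1/2}+\mathcal{L}(c)_1$ is a scalar multiple of a single bracket $[L_0,\cdot]$, yielding equality. I do not expect any substantive obstacle beyond this routine bracket verification. The only point worth flagging is why equality is asserted in (1) but not (2): for $c\ne 0$ the off-diagonal terms $2cM_{m+n+1}$ and $cY_{m+p+1}$ turn $\mathrm{ad}\,L_0$ into a merely upper-triangular operator on the $M$- and $Y$-parts, so one cannot recover each individual $M_n$ or $Y_p$ from a single bracket, and the conservative containment is all that is claimed (and all that is needed later).
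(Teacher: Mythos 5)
Your proposal is correct and follows the same route as the paper, which simply asserts the result ``by the relations of $\mathcal{L}(c)$ in Lemma \ref{lf1}''; you have carried out exactly that verification, reducing modulo $\mathcal{L}(c)_1$ via the filtration property and checking the finitely many degree-zero brackets. Your closing remark correctly identifies why only containment is claimed when $c\neq 0$ (the $2cM_{m+n+1}$ and $cY_{m+p+1}$ tails make $\mathrm{ad}\,L_0$ non-diagonal), though the lemma does not require proving strictness there.
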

\begin{proof}
By the relations of $\mathcal{L}(c)$ in Lemma \ref{lf1}, we can obtain the results immediately.
\end{proof}

\begin{lemma}
For a fixed $N \in\mathbb{Z^+},\mathcal{L}(c)_0/\mathcal{L}(c)_N$ is a finite-dimensional solvable Lie algebra.
\end{lemma}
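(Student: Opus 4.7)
The plan is to mirror the proof of Lemma~\ref{lf4}, which establishes the analogous statement for $\mathcal{L}(a,b)_0/\mathcal{L}(a,b)_N$. Finite-dimensionality of $\mathcal{L}(c)_0/\mathcal{L}(c)_N$ is immediate from the definition of the filtration, so the only real content is solvability. I would introduce the derived series $\mathcal{L}(c)_0^{(0)} = \mathcal{L}(c)_0$ and $\mathcal{L}(c)_0^{(n+1)} = [\mathcal{L}(c)_0^{(n)}, \mathcal{L}(c)_0^{(n)}]$ for $n \in \mathbb{Z}^{+}$, and show that it eventually descends into $\mathcal{L}(c)_N$.

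By the immediately preceding lemma (covering both cases $c=0$ and $c\neq 0$), I already have $\mathcal{L}(c)_0^{(1)} \subset \mathbb{C}M_0 + \mathbb{C}Y_{\frac{1}{2}} + \mathcal{L}(c)_1$. To take the next step I would verify three commutator facts read off directly from the brackets in Lemma~\ref{lf1}(2): $[M_0, Y_{\frac{1}{2}}]=0$ (which follows from $[Y_p,M_n]=0$), $[M_0, \mathcal{L}(c)_1] \subset \mathcal{L}(c)_1$, and $[Y_{\frac{1}{2}}, \mathcal{L}(c)_1] \subset \mathcal{L}(c)_1$. Combined with $[\mathcal{L}(c)_1, \mathcal{L}(c)_1] \subset \mathcal{L}(c)_2 \subset \mathcal{L}(c)_1$, these give $\mathcal{L}(c)_0^{(2)} \subset \mathcal{L}(c)_1$. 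The filtration property $[\mathcal{L}(c)_m, \mathcal{L}(c)_n] \subset \mathcal{L}(c)_{m+n}$ then yields $\mathcal{L}(c)_0^{(n+1)} \subset \mathcal{L}(c)_n$ for all $n \geq 1$ by induction, and taking $n=N$ makes the derived term trivial in the quotient $\mathcal{L}(c)_0/\mathcal{L}(c)_N$.

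The main (still mild) bookkeeping obstacle is verifying $[Y_{\frac{1}{2}}, \mathcal{L}(c)_1] \subset \mathcal{L}(c)_1$, since this is where the argument genuinely departs from a transcription of Lemma~\ref{lf4}. In $TSV(c)$ the $Y$-$Y$ bracket carries the quadratic factor $(p-q)(p+q)$ together with a $c$-shift, so $[Y_{\frac{1}{2}}, Y_{p+\frac{1}{2}}] = -p(p+1)M_p + 2cp\, M_{p+1}$ for $p\geq 1$; the two summands lie in $\mathcal{L}(c)_p$ and $\mathcal{L}(c)_{p+1}$ respectively, both contained in $\mathcal{L}(c)_1$. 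Combined with the equally straightforward $[Y_{\frac{1}{2}}, L_m]$ and $[Y_{\frac{1}{2}}, M_n]$ computations for $m,n \geq 1$, this completes the check, and the rest of the proof follows the template of Lemma~\ref{lf4} verbatim.
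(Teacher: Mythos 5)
Your proof is correct and follows essentially the same route as the paper, which simply invokes the argument of Lemma \ref{lf4}: bound the first derived subalgebra by $\mathbb{C}M_0+\mathbb{C}Y_{\frac{1}{2}}+\mathcal{L}(c)_1$, check the relevant commutators, and let the filtration property push the derived series down into $\mathcal{L}(c)_N$. Your explicit verification that $[Y_{\frac{1}{2}},\mathcal{L}(c)_1]\subset\mathcal{L}(c)_1$ (using $[Y_{\frac{1}{2}},Y_{p+\frac{1}{2}}]=-p(p+1)M_p+2cp\,M_{p+1}$) is accurate and even yields the slightly sharper bound $\mathcal{L}(c)_0^{(n+1)}\subset\mathcal{L}(c)_n$, versus the $(n+2)$-step bound used in the $\mathcal{L}(a,b)$ template.
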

\begin{proof}
It can be obtained with a similar proof of Lemma \ref{lf4}.
\end{proof}
~

 The following  result was given in \cite{Hong}.

 \begin{proposition}\label{pr5}(Ref. \cite{Hong}, Theorem 5.4)
 (1) If $a\neq 1$ or $b\neq 0$, all free nontrivial $TSV(a,b)$-modules of rank one over $\mathbb{C}[\partial]$ are as follows:
  \begin{equation*}
  \xymatrix{M_{\alpha,\beta}=\mathbb{C}[\partial]v,\quad L_\lambda v=(\partial+\alpha\lambda+\beta)v,\quad Y_\lambda v=M_\lambda v=0,\quad for\ some\ \alpha,\beta \in \mathbb{C}.}
 \end{equation*}
 In addition, all free nontrivial $TSV(1,0)$-modules of rank one over $\mathbb{C}[\partial]$ are as follows:
 \begin{equation*}
  \xymatrix{M_{\alpha,\beta,\gamma}=\mathbb{C}[\partial]v,\quad L_\lambda v=(\partial+\alpha\lambda+\beta)v,\quad Y_\lambda v=\gamma v,\quad M_\lambda v=0,\quad for\ some\ \alpha,\beta,\gamma \in \mathbb{C}.}
 \end{equation*}
(2) All free nontrivial $TSV(c)$-modules of rank one over $\mathbb{C}[\partial]$ are as follows:
  \begin{equation*}
  \xymatrix{M_{\alpha,\beta}=\mathbb{C}[\partial]v,\quad L_\lambda v=(\partial+\alpha\lambda+\beta)v,\quad Y_\lambda v=M_\lambda v=0,\quad for\ some\ \alpha,\beta \in \mathbb{C}.}
 \end{equation*}
 \end{proposition}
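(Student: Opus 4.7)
The plan is to mirror the approach used in the proof of Proposition~\ref{pr2} for $\mathcal{W}(a,b)$, now with an extra unknown coming from the $M$-action. Write
\[
L_\lambda v = f(\partial,\lambda)v,\qquad Y_\lambda v = g(\partial,\lambda)v,\qquad M_\lambda v = h(\partial,\lambda)v,
\]
for polynomials $f,g,h\in\mathbb{C}[\partial,\lambda]$. Since $\mathbb{C}[\partial]L$ is the Virasoro Lie conformal algebra, Proposition~\ref{pr1} forces $f=0$ or $f=\partial+\alpha\lambda+\beta$ for some $\alpha,\beta\in\mathbb{C}$. If $f=0$, then expanding $L_\lambda(Y_\mu v)-Y_\mu(L_\lambda v)=[L_\lambda Y]_{\lambda+\mu}v$ immediately forces $g=0$, and similarly $h=0$, giving a trivial action; so we may assume $f=\partial+\alpha\lambda+\beta$ from now on.

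Next I would use $[M_\lambda M]=0$ exactly as in the proof of Proposition~\ref{pr2}: the identity $h(\partial+\lambda,\mu)h(\partial,\lambda)=h(\partial+\mu,\lambda)h(\partial,\mu)$ forces $\deg_\partial h=0$, so $h=h(\mu)$. Applying the Jacobi identity to $[L_\lambda M]=(\partial+2(a-1)\lambda+2b)M$ (respectively $[L_\lambda M]=(\partial+2c)M$) and using conformal sesquilinearity to compute the right-hand side, one obtains
\[
\mu\,h(\mu)=\bigl((2a-3)\lambda-\mu+2b\bigr)h(\lambda+\mu)\qquad\text{(resp. }\mu\,h(\mu)=(-\lambda-\mu+2c)h(\lambda+\mu)\text{)}.
\]
Specializing $\lambda=0$ gives $(2\mu-2b)h(\mu)=0$ (resp. $(2\mu-2c)h(\mu)=0$) as a polynomial identity in $\mu$, and hence $h\equiv 0$ in both cases. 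Once $h=0$, the brackets $[Y_\lambda Y]=(\partial+2\lambda)M$ and $[Y_\lambda Y]=(\partial+2\lambda)(-\partial-2c)M$ both act trivially on $v$, so $Y_\lambda(Y_\mu v)=Y_\mu(Y_\lambda v)$, and the same degree argument used for $h$ shows $g$ is independent of $\partial$, i.e.\ $g=g(\mu)$.

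Finally, inserting $f=\partial+\alpha\lambda+\beta$ and $g=g(\mu)$ into the Jacobi identity for $[L_\lambda Y]$ produces
\[
-\mu\,g(\mu)=\bigl((a-1)\lambda-\mu+b\bigr)g(\lambda+\mu)\qquad\text{(resp. }-\mu\,g(\mu)=(\tfrac{1}{2}\lambda-\mu+c)g(\lambda+\mu)\text{)}.
\]
Setting $\lambda=0$ yields $b\,g(\mu)=0$ (resp. $c\,g(\mu)=0$), so $g=0$ whenever $b\neq0$ (resp. $c\neq0$); if $b=0$, then setting $\mu=0$ gives $(a-1)\lambda g(\lambda)=0$, forcing $g=0$ unless $a=1$. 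In the remaining case $a=1,b=0$ the identity reduces to $g(\mu)=g(\lambda+\mu)$, so $g$ is a constant $\gamma\in\mathbb{C}$. For $TSV(c)$, the analogous specialization at $\mu=0$ always kills $g$ regardless of $c$, so $g\equiv 0$. This reproduces exactly the three families in the statement.

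The only potentially delicate step is the bookkeeping in translating each conformal bracket into an equation for $f,g,h$ via sesquilinearity, in particular remembering that $(\partial Z)_{\lambda+\mu}v=-(\lambda+\mu)Z_{\lambda+\mu}v$ when the right-hand side of a bracket has a factor of $\partial$ acting on the target generator; but once each equation is written correctly, the conclusion is obtained by elementary polynomial specialization ($\lambda=0$ and $\mu=0$), exactly as in the $\mathcal{W}(a,b)$ case.
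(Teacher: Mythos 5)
The paper does not actually prove this proposition; it cites \cite{Hong}, Theorem 5.4. Your direct computation is therefore a reasonable independent route, and most of it is sound (the reduction $f=0\Rightarrow g=h=0$, the degree argument giving $\deg_\partial h=0$, and the analysis of $g$ via $[L_\lambda Y]$ all match the method of Proposition \ref{pr2}). However, there is a sign error in your functional equation for $h$ that creates a genuine gap. Computing $L_\lambda(M_\mu v)-M_\mu(L_\lambda v)=[L_\lambda M]_{\lambda+\mu}v$ with $f=\partial+\alpha\lambda+\beta$ and $h=h(\mu)$ gives
\begin{equation*}
-\mu\,h(\mu)=\bigl((2a-3)\lambda-\mu+2b\bigr)h(\lambda+\mu),
\end{equation*}
with $-\mu h(\mu)$ on the left (compare the paper's equation (3.10), where the left side is $-\mu g(\mu)$), not $+\mu h(\mu)$ as you wrote. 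With the correct sign, specializing $\lambda=0$ yields only $2b\,h(\mu)=0$, and specializing $\mu=0$ yields $\bigl((2a-3)\lambda+2b\bigr)h(\lambda)=0$. These kill $h$ whenever $b\neq0$ or $a\neq\tfrac32$, but in the case $a=\tfrac32$, $b=0$ (precisely the Schr\"odinger--Virasoro conformal algebra $TSV(\tfrac32,0)$) the equation degenerates to $h(\mu)=h(\lambda+\mu)$ and allows $h$ to be an arbitrary nonzero constant. Your claim that ``$h\equiv0$ in both cases'' therefore does not follow from $[L_\lambda M]$ alone, and since your subsequent treatment of $g$ (commutativity of the $Y$-action, hence $\deg_\partial g=0$) is predicated on $h=0$, the argument breaks at this point for $TSV(\tfrac32,0)$.

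The gap is repairable but requires the brackets you did not use. If $h=\delta\neq0$ in the case $a=\tfrac32$, $b=0$, then $[Y_\lambda M]=0$ gives $\delta\bigl(g(\partial,\lambda)-g(\partial+\mu,\lambda)\bigr)=0$, so $g$ is independent of $\partial$; the $[L_\lambda Y]$ identity $-\mu g(\mu)=(\tfrac12\lambda-\mu)g(\lambda+\mu)$ then forces $g=0$ at $\mu=0$; and finally $[Y_\lambda Y]=(\partial+2\lambda)M$ gives $0=Y_\lambda(Y_\mu v)-Y_\mu(Y_\lambda v)=(\lambda-\mu)\delta v$, a contradiction, so $\delta=0$ after all. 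For $TSV(c)$ your conclusion survives the sign correction, since the $\mu=0$ specialization of the corrected equation reads $(-\lambda+2c)h(\lambda)=0$ and kills $h$ for every $c$. So the statement is correct and your overall strategy works, but as written the proof of $h\equiv0$ is invalid and the case $(a,b)=(\tfrac32,0)$ needs the additional argument above.
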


Denote the module $M$ in Proposition \ref{pr5} (1) by $M_{\alpha,\beta}$ (respectively, $M_{\alpha,\beta,\gamma}$) if $a\neq 1$ or $b\neq 0$ (respectively, $a=1$ and $b=0$). Denote the module $M$ in Proposition \ref{pr5} (2) by $M_{\alpha,\beta}$. Thus, we can obtain the following proposition by using the method involved in Section 3.
\begin{proposition}\label{pr6}
(1) If $a\neq1$ or $ b\neq0$, $M_{\alpha,\beta}$ is an irreducible $TSV(a,b)$-module if and only if $\alpha\neq0$.\\
(2) If $a=1$ and $ b=0$, $M_{\alpha,\beta,\gamma}$ is an irreducible $TSV(1,0)$-module if and only if $\alpha\neq0$ or $\gamma\neq0$.
(3) For any $c\in \mathbb{C}$, $M_{\alpha,\beta}$ is an irreducible $TSV(c)$-module if and only if $\alpha\neq0$.
\end{proposition}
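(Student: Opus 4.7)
The plan is to reduce each case to the previously established results in Propositions \ref{pr1} and \ref{pr3}, exploiting the fact that in every module described by Proposition \ref{pr5} the actions of $Y_\lambda$ and $M_\lambda$ are either trivial or by a scalar, so the analysis essentially reduces to that of a Virasoro-type module together with an extra constant action.

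For part (1) with $a\neq 1$ or $b\neq 0$, and for part (3) with arbitrary $c$, the defining formulas give $Y_\lambda v = M_\lambda v = 0$. Consequently any $\mathbb{C}[\partial]$-submodule of $M_{\alpha,\beta}$ is automatically stable under $Y_\lambda$ and $M_\lambda$, so the submodule structure of $M_{\alpha,\beta}$ as a $TSV(a,b)$-module (resp. $TSV(c)$-module) coincides with its submodule structure as a $Vir$-module. Applying Proposition \ref{pr1} yields the desired equivalence $\alpha\neq 0$.

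For part (2), the module $M_{\alpha,\beta,\gamma}$ over $TSV(1,0)$ satisfies $M_\lambda v = 0$ and $Y_\lambda v = \gamma v$, mirroring the situation of $M_{\alpha,\beta,\gamma}$ over $\mathcal{W}(1,0)$ in Proposition \ref{pr3}(2). I would repeat that argument verbatim: if $\alpha=\gamma=0$, then $(\partial+\beta)v$ generates a proper submodule, so $M_{0,\beta,0}$ is reducible; if $\alpha\neq 0$, irreducibility as a $Vir$-module (hence as a $TSV(1,0)$-module) follows from Proposition \ref{pr1}; and if $\alpha=0$ but $\gamma\neq 0$, then for any nonzero $u=f(\partial)v$ with $\deg f = m$, the action $Y_\lambda u = \gamma f(\partial+\lambda)v$ has $\lambda^m$-coefficient equal to a nonzero scalar multiple of $v$, forcing $v$ to lie in the submodule generated by $u$ and proving irreducibility.

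Since each case merely re-applies an argument already carried out in Section 3, no genuine obstacle arises; the only item to double-check is that the $\lambda$-coefficient extraction in case (2) still works in $TSV(1,0)$, which it does because $[Y_\lambda M]=0$ and $Y_\lambda$ commutes with multiplication by $\partial$ in the same way as $W_\lambda$ did for $\mathcal{W}(1,0)$.
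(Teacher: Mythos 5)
Your proposal is correct and follows exactly the route the paper takes: the paper's proof of this proposition is simply the remark that it is "similar to that of Proposition \ref{pr3}," and you have carried out precisely that reduction — cases (1) and (3) collapse to the $Vir$-module analysis of Proposition \ref{pr1} since $Y$ and $M$ act trivially, while case (2) repeats the $\lambda^m$-coefficient extraction from Proposition \ref{pr3}(2) with $Y$ playing the role of $W$. No gaps.
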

\begin{proof}
This proof is similar to that of Proposition \ref{pr3}.
\end{proof}
The following result shows that all finite nontrivial irreducible $TSV(a,b)$-modules and $TSV(c)$-modules are free of rank one and thus of the kind given in Proposition \ref{pr6}.
\begin{theorem}\label{t3}
(1) Any  finite nontrivial irreducible $TSV(a,b)$-module $M$ is free of rank one over $\mathbb{C}[\partial]$, and $M$ is isomorphic to $M_{\alpha,\beta}$ with $\alpha \neq 0$ (respectively, $M_{\alpha,\beta,\gamma}$ with $\alpha \neq 0$ or $\gamma \neq 0$) if $a\neq 1$ or $b\neq 0$ (respectively, $a=1$ and $b=0$).

(2) Any  finite nontrivial irreducible $TSV(c)$-module $M$ is free of rank one over $\mathbb{C}[\partial]$, and $M$ is isomorphic to $M_{\alpha,\beta}$ with $\alpha \neq 0$.
\end{theorem}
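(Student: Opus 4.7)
The plan is to mirror the proof of Theorem \ref{t1}, exploiting the fact that $TSV(a,b)/\mathbb{C}[\partial]M \cong \mathcal{W}(a,b)$ and $TSV(c)/\mathbb{C}[\partial]M \cong \mathcal{W}(\tfrac{3}{2},c)$. Thus it suffices to prove that for any finite nontrivial irreducible module $V$, the ideal $\mathbb{C}[\partial]M$ acts trivially; then Theorem \ref{t1} finishes the classification. I will also need the action of $Y$ to either vanish on $V$ or reduce to the exceptional $\mathcal{W}(1,0)$-case (which cannot actually occur here since $a=\tfrac{3}{2} \neq 1$ when we quotient $TSV(c)$, and an analogous rigidity holds for $TSV(a,b)$ with $a\neq 1$).

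First, via Lemma \ref{ll1}, I would view $V$ as a conformal module over $\mathcal{L}(a,b)$ (resp. $\mathcal{L}(c)$) satisfying (\ref{eq1}). An easy adaptation of Lemma \ref{ll2} shows that for each $v\in V$ there is $m$ with $\mathcal{L}(a,b)_m\cdot v = 0$. Letting $N$ be minimal with $V_N:=\{v\in V\mid \mathcal{L}(a,b)_N\cdot v=0\}\neq 0$, nontriviality forces $N\geq 0$, and Lemma 3.1 of \cite{CK} makes $U:=V_N$ finite-dimensional. By Lemmas \ref{lf3} and \ref{lf4}, $\mathcal{L}(a,b)_N$ is an ideal of $\mathcal{L}(a,b)_0$ and the quotient is finite-dimensional solvable, so Lie's Theorem supplies a common eigenvector $v\in U$ with $x\cdot v=\chi(x)v$ for every $x\in \mathcal{L}(a,b)_0$. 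Using $\mathcal{L}(a,b)=\mathcal{Y}\oplus \mathcal{L}(a,b)_0$ with $\mathcal{Y}=\operatorname{span}_{\mathbb{C}}\{L_{-1},Y_{-1/2},\partial\}$ and PBW, irreducibility gives
\begin{equation*}
V=U(\mathcal{L}(a,b))\cdot v=\sum_{i,j,k\in\mathbb{Z}^+}\mathbb{C}\,\partial^{i}L_{-1}^{j}Y_{-1/2}^{k}\cdot v.
\end{equation*}

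Next I would split into the cases of Lemma \ref{lf2} (and its $TSV(c)$ analog). In the generic cases where $\mathbb{C}M_0+\mathbb{C}Y_{1/2}+\mathcal{L}(a,b)_1\subseteq[\mathcal{L}(a,b)_0,\mathcal{L}(a,b)_0]$, the character $\chi$ kills $M_0$, $Y_{1/2}$ and all of $\mathcal{L}(a,b)_1$, so $M_\lambda v=Y_\lambda v=0$ immediately; a commutator identity analogous to (\ref{eqs2}) then propagates $M_\lambda V=0=Y_\lambda V$ because $\mathbb{C}[\partial]Y\oplus\mathbb{C}[\partial]M$ is an ideal of $TSV(a,b)$, and Theorem \ref{t1} identifies $V$ with $M_{\alpha,\beta}$, $\alpha\neq 0$. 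The exceptional subcases ($a=2$, $a=\tfrac{3}{2}$, $a=0$ with $b\neq 0$, resp.\ $TSV(c)$ with $c\neq 0$) are where $M_0$ or $Y_{1/2}$ may act as a nonzero scalar; here the argument of Case 2 in Theorem \ref{t1} is adapted: take a minimal linear dependence $f_m(\partial)L_{-1}^{m}v=\sum_{j<m}f_j(\partial)L_{-1}^{j}v$ (or its variant involving $Y_{-1/2}$), act by $M_0$ or $Y_{1/2}$, and compare leading coefficients to force the scalar eigenvalues to vanish or to force the parameters into the narrow windows already excluded by hypothesis.

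The main obstacle will be exactly those exceptional subcases, in particular $TSV(\tfrac{3}{2},b)$ and $TSV(c)$ with $c\neq 0$, where $M_0$ is not forced into the derived subalgebra by Lie-theoretic generalities alone. In those cases I expect to need the precise bracket formulas of Lemma \ref{lf1}, together with the Schur-type observation that $\partial-L_{-1}$ (and the analogous element governing $M$) acts as a scalar on the irreducible $V$, to close off the argument. Once $M_\lambda V=0$ is established in all cases, irreducibility of $V$ as a $\mathcal{W}(a,b)$- (resp.\ $\mathcal{W}(\tfrac{3}{2},c)$-) module is automatic, and Theorem \ref{t1} combined with Proposition \ref{pr6} yields $V\cong M_{\alpha,\beta}$ (resp.\ $M_{\alpha,\beta,\gamma}$ in the single exceptional case $a=1,b=0$), finishing both parts of the theorem.
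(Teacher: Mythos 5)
Your overall strategy coincides with the paper's: show that $\mathbb{C}[\partial]M$ annihilates $V$, pass to the quotient $TSV(a,b)/\mathbb{C}[\partial]M\cong\mathcal{W}(a,b)$ (resp.\ $\mathcal{W}(\tfrac32,c)$), and quote Theorem \ref{t1}. Two points need repair. The smaller one: from $\chi(Y_{1/2})=0$ and $\chi(\mathcal{L}(a,b)_1)=0$ you conclude that ``$Y_\lambda v=0$ immediately.'' This does not follow, because $Y_\lambda v=\sum_{n\ge 0}(Y_{n-\frac12}\cdot v)\lambda^n/n!$ and the $n=0$ term involves $Y_{-1/2}=Y_{(0)}$, which lies in $\mathcal{L}(a,b)_{-1}$ but \emph{not} in $\mathcal{L}(a,b)_0$, so the character argument says nothing about it --- a priori $Y_{-1/2}\cdot v$ generates new vectors, exactly as $L_{-1}\cdot v$ does (this is why the PBW spanning set needs $Y_{-1/2}^k$). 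The step is repairable: once $M_\lambda V=0$ is known, do not try to kill $Y$ by hand; let Theorem \ref{t1} determine the $Y$-action on the quotient.

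The serious gap is the case $a=\tfrac32$ (Lemma \ref{lf2}(2)), where $M_0\notin[\mathcal{L}(a,b)_0,\mathcal{L}(a,b)_0]$ and hence $M_0\cdot v=\gamma v$ with $\gamma$ possibly nonzero. The tools you list do not close it. The minimal-dependence/leading-coefficient argument (act by $M_0$ on $f_m(\partial)L_{-1}^m\cdot v=\sum_{j<m}f_j(\partial)L_{-1}^j\cdot v$, using $[L_{-1},M_0]=2bM_0$) only forces $b=0$ when $\gamma\neq0$; once $b=0$ the element $M_0$ is central and the argument yields nothing, and the Schur observation on $\partial-L_{-1}$ likewise says nothing about $\gamma$. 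Moreover $a=\tfrac32$, $b=0$ is not ``excluded by hypothesis'' --- it is the Schr\"{o}dinger--Virasoro conformal algebra itself, squarely inside the theorem. Ruling out $\gamma\neq0$ there requires a genuinely new input, namely the relation $[Y_{-1/2},Y_{1/2}]=-M_0$, which makes $Y_{\pm 1/2}$ act as a Heisenberg pair with nonzero central charge $-\gamma$ on $V$ and thereby contradicts finiteness of the rank; the paper outsources precisely this step to the method of Theorem 4.3 in \cite{Wu-Yuan}. (Your other ``exceptional'' worries dissolve on inspection: for $a=2$ only $Y_{1/2}$ escapes the derived subalgebra, which the quotient argument absorbs; for $a=0$, $b\neq0$ the bracket $[L_0,M_0]=-3M_0+2bM_1$ still gives $\chi(M_0)=0$; and for $TSV(c)$ one has $[L_0,M_0]=-M_0+2cM_1$, so $\gamma=0$ for every $c$.)
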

\begin{proof}
(1) Assume that $V$ is a  finite nontrivial irreducible $TSV(a,b)$-module. It is also a conformal module over $\mathcal{L}(a,b)^e$ by Lemma \ref{ll1}. By Lemmas \ref{lf1}-\ref{lf4} and using similar arguments as in the proof of Theorem \ref{t1}, we can find a nonzero vector $v$ such that
\begin{equation}
  L_i\cdot v= M_i\cdot v=Y_{i+\frac{1}{2}}\cdot v=0,\qquad \forall i \in \mathbb{Z^+}\setminus \{0\},
\end{equation}
and $L_0\cdot v=\alpha v$, $M_0\cdot v=\gamma v$ and $Y_{\frac{1}{2}}\cdot v= \xi v$ for some $\alpha,\gamma,\xi \in \mathbb{C}$. By Lemma \ref{lf2} and the same discussion as that in the proof of Theorem \ref{t1}, we can obtain that $\gamma=0$, i.e., $M_0\cdot v=0$ in Cases (1),(3),(4) and (5) in Lemma \ref{lf2}. In Case (2) in Lemma \ref{lf2}, if $\gamma\neq0$, then we do the discussion similar to that in the proof of Case 2 in Theorem \ref{t1}. But in this case, the basis of
\begin{equation}
  V=U(Lie(\mathcal{L}(a,b))^e)\cdot v=\sum_{i,j,k\in\mathbb{Z^+}}\mathbb{C}\partial^{i}L_{-1}^jY_{-\frac{1}{2}}^k\cdot v.
\end{equation}
Then we can deduce $b=0$ by the similar discussion to the proof of Case 2 in Theorem \ref{t1}. If $\gamma=0$, we can deduce that $V$ is also a $TSV(a,b)/\mathbb{C}[\partial]M\cong \mathcal{W}(a,b)$-module. Thus, we can obtain that $V\cong M_{\alpha,\beta}$ or $V\cong M_{\alpha,\beta,\gamma}$ which depends on the value of $a,b$. If $\gamma\neq0, b=0$ in Case (2) of Lemma \ref{lf2}, we can obtain $\gamma=0$ by using the same method in the proof of Theorem 4.3 in \cite{Wu-Yuan}. Thus $\gamma=0$ for all cases in Lemma \ref{lf2}. Then the conclusion can be obtained by Theorem \ref{t1}.

(2) Similar discussion to the proof of (1), we can deduce that $TSV(c)/\mathbb{C}[\partial]M\cong \mathcal{W}(\frac{3}{2},c)$ and $V \cong M_{\alpha,\beta}$.

This completes the proof.

\end{proof}

\begin{remark}
Note that $TSV(0,0)$ is just the Schr\"{o}dinger-Virasoro type Lie conformal algebra studied in \cite{Wang-Xu-Xia}. Theorem \ref{t3} also gives a characterization of all finite nontrivial modules of the Schr\"{o}dinger-Virasoro type Lie conformal algebra in \cite{Wang-Xu-Xia}.
\end{remark}


%
\end{document}